\newtheorem{theorem}{Theorem}[section]
\newtheorem{corollary}[theorem]{Corollary}
\newtheorem{proposition}[theorem]{Proposition}
\newtheorem{conjecture}[theorem]{Conjecture}
\theoremstyle{definition}
\newtheorem{definition}[theorem]{Definition}
\newtheorem{problem}[theorem]{Open Problem}
\newtheorem{example}[theorem]{Example}
\numberwithin{equation}{section}
\begin{document}

\title[On the global breadth of finite groups with nontrivial partitions]{On the global breadth of finite groups\\ 
with nontrivial partitions}

\author[S.K. Muhie]{Seid Kassaw Muhie}
\address{Seid Kassaw Muhie \endgraf  
Institute of Data Science and Digital Technologies\endgraf 
 Vilnius University\endgraf
 Akademijos str. 4, LT-08663, Vilnius, Lithuania} 

\author[D.E. Otera]{Daniele Ettore Otera}
\address{Daniele Ettore Otera \endgraf  
Institute of Data Science and Digital Technologies\endgraf 
 Vilnius University\endgraf
 Akademijos str. 4, LT-08663, Vilnius, Lithuania} 

\author[F.G. Russo]{Francesco G. Russo}
\address{Francesco G. Russo \endgraf
School of Science and Technology\endgraf
University of Camerino\endgraf
via Madonna delle Carceri 9, Camerino, Italy\endgraf
and\endgraf
Department of Mathematics and Applied Mathematics\endgraf
University of the Western Cape\endgraf
Private Bag X17, 7535,  Bellville, South Africa}

\keywords{Breadth ;  nilpotent groups ; Sylow subgroups ; Generalized Inverse Frobenius' Problem. \endgraf
\textit{Mathematics Subject Classification (2020):} 20D10, 20D15,  20D60}


\begin{abstract} In a series of recent contributions on the notion of global breadth $\mathbf{B}(G)$ of a finite group $G$, it was interesting to observe  the structural conditions arising from the classification of finite groups of $\mathbf{B}(G)=8$. This  motivated the study of a new class of finite groups, namely  $\mathcal{H}=\{G \ | \ G \ \mbox{satisfies the condition } \ |G| \le \mathbf{B}(G)(\mathbf{B}(G) + 1)\}$ and very little is known about $\mathcal{H}$. Here we focus on the groups with nontrivial partitions (according to the terminology of Baer, Kegel and Kontorovich), determining first that  $\mathbf{B}(G)$ is achieved via the local breadth in connection with the order of maximal cyclic subgroups. Then  we show that $\mathcal{H}$ contains projective special linear groups, projective general linear groups and Suzuki groups, supporting the conjecture that all finite groups with nontrivial partitions belong to $\mathcal{H}$. The presence of large families of simple groups in $\mathcal{H}$ is shown  for the first time here.
\end{abstract}

\maketitle

\section{Introduction and statement of the main results}

The present paper deals with finite groups only. For all divisors $k$ of the order $|G|$ of a group $G$, Frobenius \cite[Satz 1]{frobenius}  investigated   the cardinality  of the set \begin{equation}\label{lkg} L_k(G) = \{x \in G \ | \ x^k=1\},
\end{equation} noting that $|L_k(G)|$ is  divisible by $k$.  Successive contributions were made by Iiyori and Yamaki  \cite{yamaki1, yamaki2, yamaki3, yamaki4, yamaki5} in the context of simple groups, so that the original  Frobenius' Problem (of studying systematically the arithmetic properties of \eqref{lkg}) was addressed.

On the other hand, can we get structural information on $G$ from bounds on \eqref{lkg} ? In a certain sense, this is the Inverse Frobenius' Problem, and a series of investigations have been recently made  in \cite{ hr1, hr2, hr3, ms, meng2016, meng2017,  jm, russo}. With the symbol $\mathrm{Div}(m)$, we denote those natural numbers which are divisors of a positive integer $m$, and with $\pi(m)$ the  prime divisors of $m$. Note that $\mathrm{Div}(G)$  denotes the set of all divisors of $|G|$ and $\pi(G)$ the set of all prime divisors of $|G|$. The positive integer
\begin{equation}\label{lb}\mathbf{b}_k(G) = \frac{|L_k(G)|}{k}
,\end{equation}
is called \textit{local breadths} of $G$ in \cite{hr1, hr2, hr3}, and the  optimal value
\begin{equation}\label{gb} 
\mathbf{B}(G)=\max \{ \mathbf{b}_k(G) \ | \ k \in \mathrm{Div}(G) \},
\end{equation}
is called  \textit{global breadth}  of $G$  in \cite{hr1, hr2, hr3}. If $\exp(G)$ is the exponent of $G$,  one has from these definitions that $\mathbf{b}_h(G) \le \mathbf{b}_k(G)$ when $k=\gcd(h,\exp(G))$, hence for the determination of $\mathbf{B}(G)$ only integers $k \in \mathrm{Div}(\exp(G))$ are relevant. Of course for $k=\exp(G)$ we have  \begin{equation}
\mathbf{b}_{\exp(G)}(G)= \frac{|L_{\exp(G)}(G)|}{\exp(G)} = \left( \frac{1}{\exp(G)}\right)  \   \cdot \  |L_{\exp(G)}(G)|
\end{equation} and so the local breadth of $G$ agrees with   $|L_{\exp(G)}(G)|$  up to the multiplicative normalization factor  $1/\exp(G)$; in other words there are no differences to measure  $|L_{\exp(G)}(G)|$ up to the exponent of $G$, or, to find the local breadth of $G$, when $k=\exp(G)$.  

There are alternatives for the  computation of the local breadth, looking at \cite{gp, russo} and at other contributions which focus on the number of cyclic subgroups of a given order   $m$ 
\begin{equation}\label{phi} 
\mathbf{c}_ m= |\{ C\leq G \ |  \ C \ \mbox{ is a cyclic subgroup of order} \ m \}|
\end{equation}
In fact if $m$ is a divisor of $k$,  the  Euler function $\phi(m) $ is connected with $\mathbf{b}_k(G)$ and $\mathbf{c}_m$ by
\begin{equation}\label{phib}
|L_k(G)| = \underset{m|k}{\sum}{\mathbf{c}_m  \ \phi(m)} = k \ \mathbf{b}_k(G).
\end{equation}

Several reasons motivate to investigate  local and global breadth:

\medskip
\medskip
\medskip

-- groups $G$ of $\mathbf{B}(G) \le 2$ are classified in terms of metacyclic groups; 

\medskip

-- groups $G$ of $\mathbf{B}(G)=3$ are also  classified structurally; 

\medskip

-- the cases of $\mathbf{B}(G)=4$ and $ \textbf{B}(G)\leq 7$ imply that $G$ is solvable;

\medskip

-- for groups of large $\mathbf{B}(G)$  new classes of groups appeared in the literature\footnote{ These are the so-called \textit{refined groups} in \cite[Definition 1.1]{hr3}, the \textit{deduced groups} in \cite[Definition 1.2]{hr3} and  the $\mathcal{Q}$-$groups$ in  \cite[Section 2 ]{hr2}. Classifications are available in \cite{hr1, hr2, hr3, russo}.}.  

\medskip
\medskip
\medskip

The main idea is to give answers to questions of the following type:

\medskip
\medskip

\textbf{Generalized Inverse Frobenius' Problem.} Classify all groups $G$ such that $|L_k(G)| \le f(k)$ for all $k \in \mathrm{Div}(\exp(G))$, where $f : k \in  \mathrm{Div}(G) \subseteq \mathbb{N} \mapsto f(k) \in \mathbb{N}$ is a  function depending only on  $k$.

\medskip
\medskip

The Generalized Inverse Frobenius' Problem has been studied only in special situations, namely when $f(k)$ is a constant function, or a linear function, or a quadratic function. Corresponding classifications of groups have been described in \cite{hr1, hr2, hr3, ms, meng2016, meng2017}.

\begin{problem} For arbitrary expressions of $f(k)$ (i.e.: polynomials or else), the Generalized Inverse Frobenius' Problem is still open. In particular, the following class of groups
\begin{equation}\label{classh}\mathcal{H}=\{G \ | \ G \ \mbox{satisfies the condition } \ |G| \le \mathbf{B}(G)(\mathbf{B}(G) + 1)\}
\end{equation}
was introduced in \cite[Section 8]{hr3} in connection with local breadth and global breadth, and seem to contain groups with relevant structural restrictions.\end{problem}

In order to study the aforementioned problem, we recall a classical notion which allow us to look at groups from the point of view of the covering theory in topology.

\begin{definition}[Groups with Nontrivial Partitions, see \cite{baer1}]
A collection $\mathrm{P}(G)$ of nontrivial subgroups of (a nontrivial group) $G$  is said to be \textit{a partition of G} if every nontrivial element of $G$ belongs to a unique subgroup in $\mathrm{P}(G)$. We say that  $\mathrm{P}(G)$ is a \textit{trivial partition} for $G$ if $|\mathrm{P}(G)|=1$. \end{definition}

Connections between the structure of the subgroups lattice $\mathrm{L}(G)$  of a group $G$ and the group $G$ itself  have  been largely explored in the literature. Of course, $\mathrm{P}(G)$ is a subset of $\mathrm{L}(G)$  and  groups with nontrivial partitions appear in contributions of  Baer \cite{baer1, baer2}, Kegel \cite{kegel1, kegel2, kegel3}, Kontorovitch \cite{kont} and Suzuki \cite{suzuki}.  Our first main result allows us to determine the global breadth of $G$ is found at the order of one of its maximal cyclic subgroups.

\begin{theorem}\label{l0} If $G$ is a group with nontrivial partition, then there exists a maximal cyclic subgroup $M$ of $G$ such that $\mathbf{B}(G)=\mathbf{b}_{|M|}(G).$
\end{theorem}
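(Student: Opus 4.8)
The plan is to transfer the problem onto the arithmetic of the components of the partition and then to an extremal choice of divisor. Since $|M|$ divides $\exp(G)$ for every maximal cyclic subgroup $M$, the inequality $\mathbf b_{|M|}(G)\le\mathbf B(G)$ is automatic, so the content is to exhibit a maximal cyclic $M$ with $\mathbf b_{|M|}(G)\ge\mathbf B(G)$; by the reduction recalled in the Introduction we may fix $k_0\in\mathrm{Div}(\exp(G))$ with $\mathbf b_{k_0}(G)=\mathbf B(G)$ and aim at $k_0=|M|$. Two ambient facts keep the later steps rigid: a cyclic group has only the trivial partition, so $G$ is non-cyclic and hence $\mathbf B(G)\ge 2$; and $\mathbf b_k(G)\in\mathbb N$ for every $k$, by Frobenius' divisibility of $|L_k(G)|$.

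Write $\mathrm{P}(G)=\{H_1,\dots,H_r\}$ with $r\ge 2$. Because each nonidentity element of $G$ lies in exactly one component,
\[
L_k(G)=\{1\}\ \sqcup\ \bigsqcup_{i=1}^{r}\bigl(L_k(H_i)\smallsetminus\{1\}\bigr),\qquad\text{so}\qquad k\,\mathbf b_k(G)=1-r+\sum_{i=1}^{r}\bigl|L_k(H_i)\bigr|,
\]
and, since $L_k(H_i)=L_{\gcd(k,\exp H_i)}(H_i)$, only the truncation $\gcd(k,\exp H_i)$ is felt by $H_i$. I would also use two elementary facts about how maximal cyclic subgroups sit relative to $\mathrm{P}(G)$: a cyclic subgroup $C\le H_i$ is maximal cyclic in $G$ if and only if it is maximal cyclic in $H_i$, and a maximal cyclic subgroup of $G$ meeting $H_i\smallsetminus\{1\}$ is contained in $H_i$. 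Hence it suffices to pin down the required $M$ inside one well-chosen component.

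The argument then runs in two stages. First, I claim $k_0$ may be taken to be an element order: if no element of $G$ has order exactly $k_0$ then $L_{k_0}(G)=\bigcup_{p\mid k_0}L_{k_0/p}(G)$, and by expanding this via inclusion--exclusion, bounding each $|L_{k_0/p}(G)|\le(k_0/p)\,\mathbf B(G)$, using the integrality of the local breadths and --- where the crude estimate is not sharp enough --- the disjointness of $L_\bullet(G)$ over the components, one obtains $\mathbf b_{k_0}(G)<\mathbf B(G)$, a contradiction; so $k_0$ is an element order, and then there is a maximal cyclic $M$ with $k_0\mid|M|$. Second, which is the bulk of the proof, one shows that $k_0$ can be taken to be $|M|$ itself. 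Concretely I would fix $M$ maximal cyclic with $k_0\mid|M|$ and $|M|$ as large as possible, note $M\le H$ for a single component $H$, and compare $k_0\,\mathbf b_{k_0}(G)$ with $|M|\,\mathbf b_{|M|}(G)$ termwise in the displayed identity: the summand of $H$, and of every component conjugate to $H$, rises by at least $|M|$, whereas for each remaining component $H_i$ the increment $|L_{|M|}(H_i)|-|L_{k_0}(H_i)|$ is constrained, since any new element contributing there has order dividing $\gcd(|M|,\exp H_i)$, and the pairwise triviality of $\mathrm{P}(G)$ together with the maximality of $|M|$ keep this gcd small relative to $|M|/k_0$. Pushing this estimate through yields $\mathbf b_{|M|}(G)\ge\mathbf b_{k_0}(G)=\mathbf B(G)$, hence equality.

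The step I expect to be the main obstacle is exactly the bound on these ``off-component'' increments --- ruling out that the many small gains spread over the remaining components can, in aggregate, overtake the gain contributed by $H$ and the components conjugate to it. The partition is indispensable here: it realises $L_k(G)$ as a genuine disjoint union over components, so that contributions add rather than multiply, which is precisely why the statement --- false for general finite groups --- holds under a nontrivial partition. Making this accumulation bound uniform, rather than checking it family by family against the Baer--Kegel--Suzuki classification of partitioned groups ($S_4$; $p$-groups with proper nontrivial Hughes subgroup; Frobenius groups; $\mathrm{PSL}(2,q)$, $\mathrm{PGL}(2,q)$, and the Suzuki groups $\mathrm{Sz}(q)$), is the technical heart of the argument.
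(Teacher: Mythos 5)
Your reduction (fix $k_0$ with $\mathbf b_{k_0}(G)=\mathbf B(G)$, use the partition to write $L_k(G)$ as a disjoint union over components, and try to move $k_0$ to the order of a maximal cyclic subgroup) is the same general strategy as the paper's, which also compares $\mathbf b_d(G)$ across divisors via the identity $|L_k(G)|=\sum_{m\mid k}\mathbf c_m\phi(m)$ and a case split on the orders of maximal cyclic subgroups. But your proposal has a genuine gap, and in fact your Stage 2, as prescribed, proves a false statement. You fix $M$ maximal cyclic with $k_0\mid |M|$ and $|M|$ \emph{as large as possible} and aim at $\mathbf b_{|M|}(G)\ge\mathbf b_{k_0}(G)$. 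Take $G=S_4$, the first group in the Baer--Kegel--Suzuki list and item (i) of the theorem it feeds into. Its partition consists of the six $C_2$'s generated by transpositions, the four $C_3$'s and the three $C_4$'s; one computes $\mathbf b_2(S_4)=10/2=5=\mathbf B(S_4)$, $\mathbf b_3(S_4)=3$, $\mathbf b_4(S_4)=16/4=4$. So $k_0=2$, the largest maximal cyclic subgroup of order divisible by $2$ is a $C_4$, and your claimed inequality reads $4\ge 5$. The theorem survives only because a \emph{different} maximal cyclic subgroup, a $C_2$ generated by a transposition, has order exactly $k_0$; your selection rule points at the wrong $M$, and no amount of bookkeeping of ``off-component increments'' can rescue an inequality that is false.

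There is also a quantitative mismatch in the intended accounting: to get $\mathbf b_{|M|}(G)\ge\mathbf b_{k_0}(G)$ you need $|L_{|M|}(G)|\ge (|M|/k_0)\,|L_{k_0}(G)|$, i.e.\ a \emph{multiplicative} rescaling of the whole sum by $|M|/k_0$, whereas your component of $H$ is only credited with an \emph{additive} gain of ``at least $|M|$'' per conjugate; since $|L_{k_0}(G)|=k_0\mathbf B(G)$, the deficit to be covered is $(|M|-k_0)\mathbf B(G)$, which additive gains of size $|M|$ do not meet in general. Finally, both of the steps you identify as the crux (the inclusion--exclusion bound showing $k_0$ is an element order when $k_0$ has three or more prime divisors, and the uniform bound on the off-component increments) are flagged but not carried out, so even where the strategy is sound the proof is not complete. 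The fix for Stage 2 is to change the existential witness: when $k_0$ is an element order, either some maximal cyclic subgroup has order exactly $k_0$ (and you are done), or every element of order $k_0$ lies in a strictly larger cyclic subgroup, and only in that second situation should you attempt a monotonicity argument along the chain $k_0\mid\cdots\mid |M|$ inside a single component, which is essentially what the paper's Case 2 does with the constancy of $\mathbf c_{p},\mathbf c_{p^2},\dots,\mathbf c_{p^t}$.
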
 

Our second main result  is devoted to determine that the two dimensional projective special linear groups $\mathrm{PSL}(2,q)$, the two dimensional projective general linear groups $\mathrm{PGL}(2,q)$ and the Suzuki groups $\mathrm{Sz}(2^{2n+1})$ belong to $\mathcal{H}$ (as usual $\mathbb{F}_q$ denotes the field of prime power order $q$). The presence of groups with nontrivial partitions in $\mathcal{H}$ is observed here for the first time. 

\begin{theorem}\label{maintheorem} There are groups with nontrivial partitions in $\mathcal{H}$, namely
\begin{itemize}
\item[{\rm (i)}.] The symmetric group $S_4$ on four elements belongs to $\mathcal{H}$.   
    \item[{\rm(ii)}.] If $q=3^n$ for $n\ge 2$ or $q$ is a prime power for $q\ge 5$ , then there is a constant $c \ge 2 \sqrt{2}$ such that
$$|\mathrm{PSL}(2,q)| \le  c  \ \mathbf{B}(\mathrm{PSL}(2,q)) \ \sqrt{\mathbf{B}(\mathrm{PSL}(2,q))}.$$ In particular, 
$\mathrm{PSL}(2,q) \in  \mathcal{H}$ for all $q \ge 4$. 
    \item[{\rm(iii).}] If $q=2^{2n+1}$ for all $n\ge 1$, then there is a constant $c \ge 2 \sqrt{2}$ such that
$$|\mathrm{Sz}(q)| \le  c  \ \mathbf{B}(\mathrm{Sz}(q)) \ \sqrt{\mathbf{B}(\mathrm{Sz}(q))},$$
and  $\mathrm{Sz}(q) \in  \mathcal{H}$ for all $q \ge 8$. 
\item[{\rm (iv).}] If $q$ is a prime power for $q\ge 4$, then there is a constant $c \ge 2 \sqrt{2}$ such that
$$|\mathrm{PGL}(2,q)| \le  c  \ \mathbf{B}(\mathrm{PGL}(2,q)) \ \sqrt{\mathbf{B}(\mathrm{PGL}(2,q))}$$
  and  $\mathrm{PGL}(2,q) \in  \mathcal{H}$ for all $q \ge 4$.
\end{itemize}

\end{theorem}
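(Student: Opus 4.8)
The plan is to treat the four cases uniformly by exploiting Theorem~\ref{l0}: for any group $G$ with a nontrivial partition, $\mathbf{B}(G)=\mathbf{b}_{|M|}(G)$ for some maximal cyclic subgroup $M$, and hence $\mathbf{B}(G)\ge \mathbf{b}_{|M|}(G)\ge \mathbf{c}_{|M|}\,\phi(|M|)/|M|$ from \eqref{phib}; in fact, since every element of order $|M|$ generates a conjugate of some maximal cyclic subgroup, one gets the cleaner lower bound $\mathbf{B}(G)\ge \mathbf{c}_{|M|}$ where $M$ can be chosen to have the largest possible order among maximal cyclic subgroups. So for each family I would: (1) recall the standard partition of $G$ into its maximal cyclic subgroups (for $\mathrm{PSL}(2,q)$, $\mathrm{PGL}(2,q)$, $\mathrm{Sz}(q)$ these are the well-known partitions by Galois-theoretic tori and the Sylow $p$-subgroup, due to Baer and Suzuki); (2) identify a maximal cyclic subgroup $M$ of large order together with the number of its conjugates, which is $|G:N_G(M)|$ times the number of cyclic subgroups of order $|M|$ inside $N_G(M)$; (3) derive $\mathbf{B}(G)\gtrsim |G:N_G(M)|$, and then check the inequality $|G|\le c\,\mathbf{B}(G)^{3/2}$ by comparing $|N_G(M)|^{3/2}$ against $|G|^{1/2}$, i.e.\ verifying $|G|\le c^2\,|G:N_G(M)|^3\cdot(\text{small factor})$, which reduces to an elementary estimate in $q$.

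Concretely, for $\mathrm{PSL}(2,q)$ with $q$ odd I would take $M$ to be a cyclic subgroup of order $(q+1)/\gcd(2,q-1)$ (a maximal torus); then $|N_G(M)|=q+1$ (dihedral normalizer) up to the $\gcd$, the number of conjugates is roughly $q(q-1)/2$, so $\mathbf{B}(G)$ is of order $q^2$, while $|G|$ is of order $q^3$, giving $|G|$ of order $\mathbf{B}(G)^{3/2}$ with an explicit constant; the choice $q+1$ versus $q-1$ is dictated by $4\mid q+1$ or $4\mid q-1$ so that $M$ is genuinely maximal cyclic, and the threshold $q=3^n$, $n\ge2$ appears because small cases ($q=2,3$) must be excluded or handled by hand — $q=3$ gives $A_4$ and $q=2,3$ are degenerate, whereas $S_4=\mathrm{PGL}(2,3)$ is item (i) and can be checked directly since $|S_4|=24$, $\mathbf{B}(S_4)=\mathbf{b}_{12}(S_4)$ by Theorem~\ref{l0}, and one computes $\mathbf{B}(S_4)\ge 6$ so $24\le 6\cdot 7$. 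For $\mathrm{Sz}(q)$, $q=2^{2n+1}$, the partition has three classes of cyclic tori of orders $q-1$, $q+\sqrt{2q}+1$, $q-\sqrt{2q}+1$ plus the Sylow $2$-subgroups; taking $M$ among the largest torus $q+\sqrt{2q}+1$ with normalizer of order $4(q+\sqrt{2q}+1)$ gives $\mathbf{B}(G)$ of order $|G|/q$, and since $|G|=q^2(q^2+1)(q-1)$ is of order $q^4$ one again lands on $|G|\asymp\mathbf{B}(G)^{4/3}\le\mathbf{B}(G)^{3/2}$ for $q\ge 8$; here the constant $2\sqrt2$ is tuned so the first value $q=8$ already satisfies it. The $\mathrm{PGL}(2,q)$ case parallels $\mathrm{PSL}(2,q)$ with index-$2$ adjustments to torus and normalizer orders.

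The main obstacle I expect is step~(2)–(3): getting a lower bound on $\mathbf{B}(G)$ that is tight enough — one must be careful that $\mathbf{b}_{|M|}(G)$ counts \emph{all} solutions of $x^{|M|}=1$, not merely the generators of conjugates of $M$, so $\mathbf{b}_{|M|}(G)$ could a priori be larger (which only helps the inequality) but one needs at least the clean lower bound $\mathbf{B}(G)\ge \mathbf{c}_{|M|}$; conversely, to pin down $\mathbf{c}_{|M|}$ exactly one needs the precise normalizer structure (dihedral of order $2|M|$, or the Frobenius-type normalizer $4|M|$ in Suzuki groups) and the observation that within such a normalizer there is a unique cyclic subgroup of order $|M|$. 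Once $\mathbf{c}_{|M|}=|G:N_G(M)|$ is established, the remaining inequality $|G|\le c\,\mathbf{B}(G)^{3/2}$ is a one-line computation in $q$ for each family, and the embedding $\mathcal H\supseteq\{$these groups$\}$ follows because $c\,\mathbf{B}(G)^{3/2}\le \mathbf{B}(G)(\mathbf{B}(G)+1)$ as soon as $\mathbf{B}(G)\ge c^2$, which holds in the stated ranges; the finitely many residual small $q$ (such as $q=4,5,7,9$ for the linear groups and $q=8$ for Suzuki) are verified individually, again via Theorem~\ref{l0} to compute $\mathbf{B}(G)$ at the order of a maximal cyclic subgroup.
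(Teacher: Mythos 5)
Your overall architecture is the same as the paper's: use Theorem \ref{l0} to reduce $\mathbf{B}(G)$ to a local breadth at the order of a maximal cyclic subgroup, evaluate that local breadth by counting conjugate tori via the partition and \eqref{phib}, prove $|G|\le 2\sqrt{2}\,\mathbf{B}(G)^{3/2}$ by an elementary estimate in $q$, and deduce $G\in\mathcal{H}$ from $\mathbf{B}(G)\ge c^2-2$ exactly as you indicate. The paper does this by computing $\mathbf{B}$ \emph{exactly} in Corollaries \ref{l1}--\ref{l3} and then checking $|G|^2\le 8\,\mathbf{B}^3$ algebraically. The first concrete problem with your version is the ``cleaner lower bound'' $\mathbf{B}(G)\ge\mathbf{c}_{|M|}$: it is false. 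The partition gives $|L_{|M|}(G)|=1+\mathbf{c}_{|M|}(|M|-1)+(\text{contributions of other classes})$, so $\mathbf{b}_{|M|}(G)=\mathbf{c}_{|M|}-(\mathbf{c}_{|M|}-1)/|M|+\cdots$, which is generically \emph{smaller} than $\mathbf{c}_{|M|}$; indeed for $\mathrm{PSL}(2,q)$ with $q$ odd one has $\mathbf{B}=q(q-1)/2-2=\mathbf{c}_{(q+1)/2}-2<\mathbf{c}_{(q+1)/2}$ even though $(q+1)/2$ is the largest order of a maximal cyclic subgroup. The correction term is of size about $\mathbf{c}_{|M|}/|M|\sim q/2$, which is precisely what makes the constant $2\sqrt 2$ and the $\mathcal{H}$-inequality tight at small $q$, so it cannot be absorbed.

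The second problem is the choice of torus. The maximum of $\mathbf{b}_d$ sits at the partition class with the \emph{most} conjugates, i.e.\ the torus of \emph{smallest} order: $d=(q-1)/2$ for $\mathrm{PSL}(2,q)$ with $q$ odd (since $\mathbf{c}_{(q-1)/2}=q(q+1)/2>q(q-1)/2=\mathbf{c}_{(q+1)/2}$), $d=2$ for $q$ even (where $\mathbf{b}_2=q^2/2$), and $d=q-1$ for $\mathrm{Sz}(q)$ and for $\mathrm{PGL}(2,q)$ with $q$ odd. With your choice of the large nonsplit torus the lower bound is too weak exactly at the boundary of the claimed ranges: for $\mathrm{PSL}(2,5)\simeq A_5$ you get only $\mathbf{b}_3=7$ and $7\cdot 8=56<60=|A_5|$, and for $\mathrm{PSL}(2,4)\simeq A_5$ only $\mathbf{b}_5=5$ with $5\cdot 6=30<60$; these are not degenerate exclusions but genuine instances of part (ii). Your fallback of verifying finitely many small $q$ by hand can rescue the conclusion, but the quantitative core (which $M$, and the exact value of $\mathbf{b}_{|M|}$) has to be redone along the lines of Corollaries \ref{l1}--\ref{l3} rather than estimated. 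Finally, in (i) note that $\mathbf{B}(S_4)=5=\mathbf{b}_2(S_4)$ (attained at a maximal cyclic subgroup of order $2$), not $\mathbf{b}_{12}(S_4)=2$, and $\mathbf{B}(S_4)\ge 6$ is false; the membership $24\le 5\cdot 6$ still holds, as in the paper.
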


Among simple groups with a nontrivial partition,  there are very few simple groups (see \cite [Corollary 3.5.12] {rs}), therefore one could explore whether the presence of a partition is really necessary among simple groups, in order to find large families of simple groups in $\mathcal{H}$.

The paper is organised in four sections: Section 1 formulates the main results and the motivations to be interested in these. Some preliminary notions are explained properly in Section 2, where previous results on the global breadth are recalled from the literature. Section 3 is devoted to prove our first main result and its consequences, while Section 4 is devoted to prove our second main result and ends with two examples which illustrate the computational simplifications which we are offering via Theorems \ref{l0} and \ref{maintheorem}. A final   open conjecture on $\mathcal{H}$ is placed in Section 4, resembling some evidences which we encountered in our research. Notations and terminology are standard and follow \cite{huppert, rs}.

\section{Preliminaries and results on groups with nontrivial partitions}

We begin to recall that the  \textit{holomorph} $\mathrm{Hol}(G)$ of a group $G$ (see \cite{huppert}) is the extension of $G$ by its automorpshim group and that \begin{equation}\mathrm{Aff}(\mathbb{F}_q)= \{ \phi : x \in \mathbb{F}_q \mapsto ax + b \in \mathbb{F}_q  \ | \ a \in \mathbb{F}^\times_q, b \in \mathbb{F}_q  \} 
\end{equation} is the  group of affine mappings from  $\mathbb{F}_q$ onto itself, see \cite[Kapitel V, Beispiele 8.6]{huppert}. Here  $\mathbb{F}^\times_q= \mathbb{F}_q \setminus\{0\}$ is the multiplicative group of  $\mathbb{F}_q$ and the general linear group $\mathrm{GL}(2,q)$ of dimension two on $\mathbb{F}_q$ is always a subgroup of $\mathrm{Aff}(\mathbb{F}_q)$, since $\mathrm{Aff}(\mathbb{F}_q)=T(\mathbb{F}_q) \rtimes \mathrm{GL}(2,q)$, that is, it turns out to be  semidirect product of $\mathrm{GL}(2,q)$ by the group $T(\mathbb{F}_q)$ of the translations from $\mathbb{F}_q$ to $\mathbb{F}_q$. 
We also recall that given a prime $p$ and a group $G$, \begin{equation}H_p(G)=\langle g \in G \mid x^p \neq 1\rangle
\end{equation} is the $Hughes$ $subgroup$ of  $G$, see \cite{kegel2}. Of course, $H_p(G)$ turns out to be the smallest subgroup of $G$ outside of which all elements of $G$ have order $p$ and, if $G$ has $\exp(G)=p$, then $H_p(G)=1$. Note also that $H_p(G)$ is always characterstic in $G$. Having in mind the results in \cite[Kapitel V, \S 8]{huppert}, we shall also mention that a group $G$ is  $Hughes$-$Thompson$ $type$, if it is not a $p$-group and $H_p(G) \neq G$. Kegel \cite{kegel2} has shown that groups of Hughes-Thompson type have  $H_p(G)$ which is nilpotent of index $|G : H_p(G)| = p$. The notion of $Frobenius$ $group$ is well known, see \cite[Kapitel V, \S 8, Definition 8.1]{huppert},  and that of $Suzuki$ $group$ $\mathrm{Sz}(2^{2n+1})$ as well, see \cite[Kapitel II \S 10, Bemerkung 10.15]{huppert}. Then details are omitted here, concerning their formal definitions.

Groups with partitions were largely studied and classified in \cite{baer1, baer2, kegel1, kegel2, kegel3, kont, suzuki}. They have relevant properties of structural nature and probabilistic counting formulas for their number of commuting subgroups have been recently found in \cite{seidnew}.

\begin{theorem}[See \cite{mf},  Classification Theorem, pp.119-120]\label{niceclassification}Let $G$ be a  group with
 a nontrivial partition. Then $G$ is isomorphic with exactly one of the following groups:
\begin{itemize}
\item [(i).] the symmetric group $S_4$ on four elements;
\item [(ii).] a $p$-group with $H_p(G) \neq G$;
\item [(iii).] a group of Hughes-Thompson type;
\item [(iv).] a Frobenius group;
\item [(v).]  $\mathrm{PSL}(2,q)$ for $q \ge 4  $;
\item [(vi).]  $\mathrm{PGL}(2,q)$ for $q \ge 5$ odd prime power;
\item [(vii).] $\mathrm{Sz}(2^{2n+1})$.
\end{itemize}
\end{theorem}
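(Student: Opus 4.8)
The plan is to prove the classification in the two usual directions, the constructive one being routine and the converse carrying all the weight. For the \emph{easy direction} one exhibits an explicit partition of each listed group: the kernel together with the conjugates of a complement for a Frobenius group; the Hughes subgroup $H_p(G)$ together with the cyclic subgroups generated by the elements of order $p$ outside it for a $p$-group with $H_p(G)\neq G$; the three cyclic subgroups of order $4$, the four Sylow $3$-subgroups and the six subgroups generated by transpositions for $S_4$; and, for $\mathrm{PSL}(2,q)$, $\mathrm{PGL}(2,q)$ and $\mathrm{Sz}(2^{2n+1})$, the Sylow subgroups together with the maximal cyclic tori, which are pairwise trivial-intersection by the rank-one $BN$-pair structure of these groups.

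The converse is the hard direction. First I would record the elementary rigidity forced by a partition $\mathrm{P}(G)=\{H_i\}$: distinct components meet trivially, conjugation permutes $\mathrm{P}(G)$, and the centraliser of any nontrivial element is contained in the unique component containing it, so $C_G(x)\le H_i$ whenever $1\neq x\in H_i$. This last property pins down centralisers of involutions and of prime-order elements very tightly and is the engine of every subsequent case.

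The decisive and hardest step is the \emph{simple} case: if $G$ is nonabelian simple and carries a nontrivial partition, then $G\cong\mathrm{PSL}(2,q)$ with $q\ge 4$, or $G\cong\mathrm{Sz}(2^{2n+1})$. Here $G$ has no exploitable proper normal subgroup, and the partition forces the Sylow subgroups to be trivial-intersection and the involution centralisers to lie inside single components, hence to be abelian or metacyclic. This is exactly the hypothesis of Suzuki's classification of finite simple groups by the structure of their involution centralisers and of their $TI$ Sylow subgroups, and it is his theorem that isolates the rank-one families $\mathrm{PSL}(2,q)$ and $\mathrm{Sz}(q)$. I expect this to be the main obstacle, since it cannot be reduced to elementary counting: one must either reproduce Suzuki's character- and involution-theoretic machinery, or, granting the classification of finite simple groups, verify directly (as in \cite[Corollary 3.5.12]{rs}) that no other simple group admits a nontrivial partition.

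With the simple case in hand, the remaining (non-simple) groups are disposed of by the Baer--Kegel reduction. If $G$ is a $p$-group the components are governed by the split between elements of order $p$ and of larger order, and a nontrivial partition exists precisely when $H_p(G)\neq G$, giving (ii). If $G$ is not a $p$-group, then the malnormality of a non-normal component (coming from $C_G(x)\le H_i$) produces a Frobenius kernel-complement factorisation and hence (iv); a nilpotent normal subgroup of prime index $p$ containing every element $g$ with $g^p\neq 1$ produces a Hughes--Thompson group and hence (iii), using Kegel's theorem that there $H_p(G)$ is nilpotent of index $p$; and the almost-simple non-simple possibilities reduce, via the simple case above, to the unique partition-preserving index-two extension $\mathrm{PGL}(2,q)$ of $\mathrm{PSL}(2,q)$ for $q\ge 5$ odd, giving (vi), with the small sporadic extension $S_4\cong\mathrm{PGL}(2,3)$ of the Frobenius group $\mathrm{PSL}(2,3)\cong A_4$ recorded separately as (i). The bookkeeping obstacle in this last stage is to show these alternatives are exhaustive, which is where Baer's and Kegel's structural analysis of non-simple partitioned groups is invoked.
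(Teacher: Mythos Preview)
The paper does not give a proof of this theorem: it is quoted verbatim as a known classification result from the literature (Baer, Kegel, Suzuki, as surveyed in \cite{mf}), and is used as a black box throughout. There is therefore no ``paper's own proof'' to compare your proposal against. Your outline is a faithful sketch of the classical argument (easy direction by exhibiting partitions, hard direction via Suzuki's theorem for the simple case and the Baer--Kegel reduction otherwise), but in the context of this paper the statement is simply cited, not proved.
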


The following notion is of more recent introduction:

\begin{definition}[See \cite{hr2}]\label{refined} A group $G$ is said to be \textit{refined} if we have  $\mathbf{B}(G) \neq \mathbf{B}(G/N )$ for all proper normal subgroups $N$ of $G$. 
\end{definition}

Essentially, refined groups are those groups possessing global breadth always different from their nontrivial homomorphic images. Moreover refined groups may be of arbitrary global breadth: they have in fact  a  sophisticated structure (below $C_m$ is cyclic group of order $m$ for $m$ positive integer).

\begin{theorem}[See \cite{hr2}, Theorem 3.1] \label{anidea}
Let  $G$ be a refined group of $\mathbf{ B}(G) = m \ge 2$. Then the following is true:
\begin{itemize}
\item[(i).]  $|G|$ is not divisible by primes $p > 2m-1$.
\item[(ii).] If a prime $p$ with $m < p \le 2m-1$ divides $|G|$, then $G$ is a subgroup of  $\mathrm{Hol}(C_p)$  and $p-m \in  \mathrm{Div} (m-1)$.
\item[(iii).] If $m$ is prime and $m \in \mathrm{Div}(G)$, then one of the following conditions  is satisfied:  
\begin{itemize}
\item[(1).] $G \simeq C_m \times C_m$; 
\item[(2).] $G \simeq C_m \times U$, where $U$ is a nonabelian subgroup of $\mathrm{Hol}(C_m)$;
\item[(3).] $m+1$ is a prime power and $G \simeq \mathrm{Aff}(\mathbb{F}_{m+1})$.    
\end{itemize}
\item[(iv).] If $G$ is abelian, then $ \exp(G) \in \mathrm{Div}(m)$. Furthermore there exists an abelian subgroup $R$ of $G$ such that $G/R \simeq C_t \times C_t$, where $t = \exp(G)$.
\end{itemize}
\end{theorem}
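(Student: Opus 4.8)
The plan is to derive all four assertions from the elementary divisibility arithmetic of the integers $\mathbf{b}_k$, bootstrapped by the defining property of a \emph{refined} group. I would use three basic facts throughout: Frobenius' divisibility $k\mid|L_k(G)|$ (so that each $\mathbf{b}_k(G)$ is a positive integer), the identity $|L_k(G)|=\sum_{d\mid k}\mathbf{c}_d\,\phi(d)$ of \eqref{phib}, and the monotonicity $\mathbf{B}(G/N)\le\mathbf{B}(G)$ for $N\trianglelefteq G$, so that being refined means this inequality is \emph{strict} on every proper quotient. The uniform starting point I would exploit is that for a prime $p\mid|G|$ one has $|L_p(G)|=1+\mathbf{c}_p(p-1)$, whence Frobenius forces $\mathbf{c}_p\equiv1\pmod p$; consequently either $\mathbf{c}_p=1$ and $G$ has a unique (hence normal and characteristic) subgroup $P$ of order $p$, or $\mathbf{c}_p\ge p+1$ and then $\mathbf{b}_p(G)\ge\bigl(1+(p+1)(p-1)\bigr)/p=p$, so $\mathbf{B}(G)\ge p$. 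A companion observation I would record at the outset: whenever such a normal $P$ of prime order $p$ exists and $\gcd(k,p)=1$, the canonical map $G\to G/P$ restricts to an injection $L_k(G)\hookrightarrow L_k(G/P)$, so $\mathbf{b}_k(G)\le\mathbf{B}(G/P)<\mathbf{B}(G)$; hence in a refined group $\mathbf{B}(G)$ can only be attained on integers $k$ divisible by $p$.

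For (i) and (ii) I would suppose a prime $p>m$ divides $|G|$. Then $\mathbf{c}_p\ge p+1$ is impossible (it would give $\mathbf{B}(G)\ge p>m$), so $\mathbf{c}_p=1$ and $P\trianglelefteq G$ with cyclic (or, if $p=2$, generalized quaternion) Sylow $p$-subgroup. Pushing the same dichotomy up the chain of cyclic $p$-subgroups ($\mathbf{c}_{p^{i}}\equiv1\pmod p$, and $\mathbf{c}_{p^{i}}\ge p+1$ again forcing $\mathbf{B}(G)\ge p$) shows that either the Sylow $p$-subgroup has order $p$ or it is normal; in the latter case, and likewise whenever $C_G(P)\supsetneq P$ (so $C_G(P)=P\times K$ with $K\ne1$ characteristic, hence normal, in $G$), the pertinent proper quotient is seen, using the companion observation to locate $\mathbf{B}(G)$, already to realise the breadth $m$ --- contradicting refinedness. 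So the Sylow $p$-subgroup of $G$ has order $p$ and $C_G(P)=P$, whence $G/P\hookrightarrow\mathrm{Aut}(C_p)=C_{p-1}$ and, by Schur--Zassenhaus, $G=C_p\rtimes C_e\le\mathrm{Hol}(C_p)$ with $e\mid p-1$; this proves the first half of (ii). A direct computation with \eqref{phib} then gives $\mathbf{b}_d(G)=p-\tfrac{p-1}{d}$ for $d\mid e$ and $\mathbf{b}_{pd}(G)=1$, so $\mathbf{B}(G)=p-\tfrac{p-1}{e}$; if this equals $m$ then $p-m=\tfrac{p-1}{e}$ divides $p-1$, and since $p-1=(p-m)+(m-1)$ this is equivalent to $p-m\in\mathrm{Div}(m-1)$, giving (ii). For (i), $\mathbf{B}(G)=p-\tfrac{p-1}{e}\le p-1$ forces $p\le m+1$, so in particular no prime $p>2m-1$ divides $|G|$.

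For (iii), with $m$ prime and $m\mid|G|$, the inequality $\mathbf{b}_m(G)\le m$ together with $|L_m(G)|=1+\mathbf{c}_m(m-1)$ gives $\mathbf{c}_m\le m+1$, so $\mathbf{c}_m\in\{1,m+1\}$ by the congruence. If $\mathbf{c}_m=1$, applying the argument of (ii) to $p=m$ would force $G\le\mathrm{Hol}(C_m)$ with $\mathbf{B}(G)=m-\tfrac{m-1}{e}<m$, or an outright refinedness violation --- either way a contradiction. Hence $\mathbf{c}_m=m+1$, so $\mathbf{B}(G)=\mathbf{b}_m(G)$, $|L_m(G)|=m^2$, and the $m+1$ subgroups of order $m$ partition these $m^2$ elements. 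What remains is to classify the refined groups of breadth $m$ carrying exactly $m+1$ subgroups of order $m$: I would analyse the Sylow $m$-subgroup $S$ and the conjugation action of $G$ on these $m+1$ subgroups, and strip off --- using refinedness --- any proper normal subgroup whose quotient already has breadth $m$; this should yield the trichotomy $S\simeq C_m\times C_m$ is normal and equals $G$ (so $G\simeq C_m\times C_m$); the order-$m$ elements generate a proper elementary abelian normal subgroup with a complement forcing $G\simeq C_m\times U$, $U$ nonabelian in $\mathrm{Hol}(C_m)$; or $G$ is $2$-transitive on the $m+1$ subgroups, whence $m+1$ is a prime power and $G\simeq\mathrm{Aff}(\mathbb{F}_{m+1})$ (the subcase in which a prime $>m$ divides $|G|$ lands here, and then $m=2$, $G\simeq S_4\cap\langle\cdots\rangle$ --- more precisely $G\simeq S_3=\mathrm{Aff}(\mathbb{F}_3)$).

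For (iv), $G$ abelian: first I would record that $\mathbf{B}(G)=\mathbf{b}_{\exp(G)}(G)=|G|/\exp(G)$ (the maximum over $k$ being attained at $k=\exp(G)$) and that $\mathbf{B}(G_1\times G_2)=\mathbf{B}(G_1)\mathbf{B}(G_2)$ for coprime $|G_1|,|G_2|$, so I may treat each prime $p$ separately; write the $p$-part of $G$ as $C_{p^{a_1}}\times\cdots\times C_{p^{a_r}}$ with $a_1\ge\cdots\ge a_r$ and $p^{a_1}$ the $p$-part of $\exp(G)$. If $a_1>\sum_{i\ge2}a_i$, quotienting by the subgroup $C_{p^{\,a_1-\sum_{i\ge2}a_i}}$ of the first cyclic factor gives a proper quotient of the same breadth; and if $a_1>a_2$, quotienting instead by $C_{p^{\,a_1-a_2}}$ of the first factor does so; each contradicts refinedness. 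Hence $a_1=a_2$ and $a_1\le\sum_{i\ge2}a_i$ for every $p$, which gives $\exp(G)\in\mathrm{Div}(m)$; and taking $R$ to be the product, over all primes, of the invariant factors from the third onward yields $G/R\simeq\prod_p\bigl(C_{p^{a_1}}\times C_{p^{a_1}}\bigr)\simeq C_t\times C_t$ with $t=\exp(G)$, completing (iv). The main obstacle I anticipate is the classification step inside (iii): passing from ``$m+1$ cyclic subgroups of prime order $m$, partitioning $m^2$ elements, in a refined group of breadth $m$'' to exactly the three listed groups needs genuine structure theory (Sylow normalisers, Frobenius groups, and the induced $2$-transitive action), and the closely related bookkeeping in (i)--(iii) --- locating precisely where $\mathbf{B}(G)$ is attained and exhibiting, each time a normal subgroup of prime order, a normal Sylow, or a normal kernel appears, a proper quotient already of breadth $m$ --- is the delicate part.
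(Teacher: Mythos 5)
First, note that the paper you are reading does not prove this statement at all: it is quoted verbatim from Heineken and Russo \cite{hr2}, Theorem 3.1, and used as a black box, so there is no in-paper argument to compare yours against. Judged on its own terms, your outline gets the elementary arithmetic right: the congruence $\mathbf{c}_p\equiv 1\pmod p$ from Frobenius' divisibility, the computation $\mathbf{B}(C_p\rtimes C_e)=p-\tfrac{p-1}{e}$ inside $\mathrm{Hol}(C_p)$ together with the deduction $p-m\in\mathrm{Div}(m-1)$, and essentially all of part (iv) are sound (though your stated reason for (i), ``$m\le p-1$ forces $p\le m+1$'', is backwards; (i) really follows from $p-m\le m-1$ in (ii)). But the proposal contains one genuinely false lemma and one unproved classification.

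The false lemma is your ``companion observation'': for $P\trianglelefteq G$ of prime order $p$ and $\gcd(k,p)=1$, the map $L_k(G)\to L_k(G/P)$ need \emph{not} be injective. Take $G=S_3\simeq D_6$, $P=C_3$, $k=2$: then $|L_2(G)|=4$ while $|L_2(G/P)|=|L_2(C_2)|=2$, so $\mathbf{b}_2(G)=2>1=\mathbf{B}(G/P)$. Worse, $D_6$ is itself a refined group of breadth $2$ whose global breadth is attained at $k=2$, coprime to $3$, so the corollary you extract (``in a refined group $\mathbf{B}(G)$ can only be attained on integers $k$ divisible by $p$'') is false as well. This observation is load-bearing: it is exactly what you invoke to rule out $C_G(P)\supsetneq P$ and a Sylow $p$-subgroup of order greater than $p$ in parts (i)--(iii), so those reductions are unsupported as written; a correct argument has to reach $G\le\mathrm{Hol}(C_p)$ by other means (the only safe general tools here are the monotonicity $\mathbf{B}(G/N)\le\mathbf{B}(G)$ and multiplicativity over direct factors of coprime order, together with direct counting of elements of each order). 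Separately, the heart of part (iii) --- passing from ``$m+1$ subgroups of order $m$ partitioning $m^2$ elements in a refined group of breadth $m$'' to exactly the three listed groups --- is only announced (``this should yield the trichotomy''), not proved; as you yourself concede, that is where the real structure theory lives, and the proof would have to supply it.
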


Of course, simple groups are refined, due to the absence of nontrivial proper normal subgroups. Hence Theorem \ref{anidea} gives an idea of what can happen for linear groups which are simple groups. Another result, which is very useful to recall, deals with the computation of the global breadth in case of extensions.

\begin{proposition}[See \cite{hr2}, Corollary 4.1]\label{tek1}
If $s = p_1^{n_1}...p_r^{n_r}$ is a factorization in the  product of prime powers of the integer $s \ge 1$ such that $p_i^{n_i} \equiv 1 \mod k$ for all $i=1,2,\ldots, r$,  there is an abelian group $N$ of order $s$ and a 
cyclic subgroup $W \subseteq \mathrm{Aut}(N)$ such that $|W| = k$ and nontrivial elements of $W$ operate without fixed points on $N$. The extension $G$ of $N$ by $W$ satisfies the relation \begin{equation}k \cdot \mathbf{ B}(G) = (k-1)s + 1;
\end{equation} in particular
 $2 \cdot \mathbf{ B}(G) > s$.\end{proposition}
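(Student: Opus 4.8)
The plan is to realize $G$ explicitly as a Frobenius group, so that the partition of $G$ into its kernel together with the conjugates of a complement turns each set $L_d(G)$ into a transparent count, after which one optimizes $\mathbf{b}_d(G)$ over $d\in\mathrm{Div}(\exp G)$. Concretely, for each $i$ I would take $N_i$ to be the additive group of the field $\mathbb{F}_{p_i^{n_i}}$ (elementary abelian of order $p_i^{n_i}$) and set $N=N_1\times\cdots\times N_r$, abelian of order $s$. Since $\mathbb{F}_{p_i^{n_i}}^{\times}$ is cyclic of order $p_i^{n_i}-1$, a multiple of $k$, it contains an element $\zeta_i$ of order exactly $k$ acting on $N_i$ by multiplication; let $W=\langle(\zeta_1,\dots,\zeta_r)\rangle\le\mathrm{Aut}(N)$ act componentwise, a cyclic group of order $k$. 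A nontrivial $(\zeta_1^j,\dots,\zeta_r^j)$ with $1\le j\le k-1$ fixes a nonzero tuple only if some $\zeta_i^j$ fixes a nonzero element of $\mathbb{F}_{p_i^{n_i}}$, i.e. $\zeta_i^j=1$, forcing $k\mid j$; hence $W$ acts fixed-point-freely and $G=N\rtimes W$ is a Frobenius group with kernel $N$, complement $W$ and $|G|=sk$. A common prime of $k$ and some $p_i^{n_i}$ would divide $p_i^{n_i}-1$, so $\gcd(s,k)=1$; thus $\exp(N)=\mathrm{rad}(s):=p_1\cdots p_r$ is coprime to $k$, and $\exp(G)=k\,\mathrm{rad}(s)$.

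\textbf{Counting and optimizing.} Using the Frobenius partition $G=N\sqcup\bigsqcup_{j=1}^{s}\bigl(W^{g_j}\setminus\{1\}\bigr)$ — where $W^{g_1},\dots,W^{g_s}$ are the $s=|G:N_G(W)|$ conjugates of the self-normalizing, pairwise trivially intersecting complement $W$ — I would write each $d\mid\exp G$ uniquely as $d=d_Nd_W$ with $d_N\mid\mathrm{rad}(s)$, $d_W\mid k$. Coprimality of $\exp(N)$ and $k$ makes $x^d=1$ equivalent to $x^{d_N}=1$ for $x\in N$, and to $x^{d_W}=1$ for $x$ in a conjugate of $W$, so with $s_{d_N}:=\prod_{p_i\mid d_N}p_i^{n_i}=|L_{d_N}(N)|$ one obtains
\[
|L_d(G)|=s_{d_N}+s(d_W-1),\qquad \mathbf{b}_d(G)=\frac{s_{d_N}+s(d_W-1)}{d_Nd_W},
\]
and in particular $\mathbf{b}_k(G)=\dfrac{(k-1)s+1}{k}$. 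To see this is the maximum, rewrite $\mathbf{b}_d(G)=\frac1{d_N}\bigl(s+\frac{s_{d_N}-s}{d_W}\bigr)$: since $s_{d_N}\le s$ this grows with $d_W$, so the optimum has $d_W=k$ and equals $\frac{(k-1)s+s_{d_N}}{kd_N}$; comparison with the value at $d_N=1$ reduces to $s_{d_N}-d_N\le(d_N-1)(k-1)s$, which holds because for $d_N>1$ the right-hand side is $\ge s>s_{d_N}-1\ge s_{d_N}-d_N$ (using $k\ge2$ and $s_{d_N}\le s$), with equality precisely at $d_N=1$. Hence $\mathbf{B}(G)=\mathbf{b}_k(G)$, i.e. $k\cdot\mathbf{B}(G)=(k-1)s+1$, and $2\mathbf{B}(G)>s$ then follows from $2\bigl((k-1)s+1\bigr)-ks=(k-2)s+2>0$.

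\textbf{Anticipated obstacle.} I expect the genuine difficulty to lie in the optimality step, not in the construction or the count: one cannot simply argue that a larger $L_d(G)$ gives a larger breadth, since the normalizing divisor $d$ grows as well, and it is exactly the elementary but slightly delicate inequality above — balancing the numerator gain $s(d_W-1)$ against the loss incurred in replacing $d=k$ by a general $d=d_Nd_W$ — that pins the maximum at $d=k$.
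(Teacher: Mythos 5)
The paper does not prove Proposition~\ref{tek1} at all: it is imported verbatim from \cite[Corollary 4.1]{hr2}, so there is no internal argument to compare yours against. Judged on its own terms, your proof is correct and complete. The construction ($N=\prod_i \mathbb{F}_{p_i^{n_i}}^{+}$ with $W$ generated by a tuple of multiplications by elements $\zeta_i$ of order exactly $k$) is the natural realization, the fixed-point-free verification and the deduction $\gcd(s,k)=1$, $\exp(G)=k\,p_1\cdots p_r$ are sound, and the Frobenius partition $G=N\sqcup\bigsqcup_{j=1}^{s}(W^{g_j}\setminus\{1\})$ gives exactly the count $|L_d(G)|=s_{d_N}+s(d_W-1)$ (I checked it against $S_3$ and against $\mathbb{F}_9^{+}\rtimes C_4$, where it yields $\mathbf{B}=2$ and $\mathbf{B}=7$ respectively, matching $(k-1)s+1=k\mathbf{B}$). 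You are also right to flag the optimization over $d=d_Nd_W$ as the real content: the monotonicity in $d_W$ from $s_{d_N}\le s$ and the comparison $s_{d_N}-d_N\le (d_N-1)(k-1)s$ are exactly what is needed, and your chain of inequalities for $d_N>1$ is valid. The only caveat worth recording is that your argument (like the statement itself) silently requires $k\ge 2$: for $k=1$ the claimed identity $\mathbf{B}(N)=1$ is false for, say, $N$ elementary abelian of order $p^2$, and your step $(d_N-1)(k-1)s\ge s$ uses $k\ge 2$; since a fixed-point-free action and the conclusion $2\mathbf{B}(G)>s$ are only meaningful for $k\ge 2$, this is a defect of the quoted statement rather than of your proof, but it deserves an explicit hypothesis.
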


We end this section with a few numerical results, which are reported for the convenience of the reader from \cite{hr1, hr2, hr3, ms, meng2016, meng2017}, collecting the relevant information. Below \begin{equation}D_{2n}=\langle a,t \mid a^n=t^2=1, \ t^{-1}at=a^{-1}\rangle
\end{equation} denotes the usual dihedral group of order $2n$, where $n \ge 1$. Since  $D_2 \simeq C_2$ and $D_4 \simeq C_2 \times C_2$, it is clear that relevant nonabelian examples begin from $n \ge 3$. Note also that alternating groups on $n$ elements are denoted by $A_n$,  symmetric groups on $n$ elements by $S_n$ and the quaternion group of order 8 by $Q_8$.

\medskip
\medskip

\textbf{Refined nonabelian groups of global breadth at most 11}\\
\\
$\mathbf{B}(G)$ \ \ \ \  \ \ \ \  $G$ refined nonabelian 

\medskip
\medskip

2 \ \ \ \ \ \ \ \  \ \ $D_6$. 

\medskip
\medskip

3 \ \ \ \ \ \ \ \  \ \ $A_4$, $D_{10}$, $D_8$, $D_6 \times C_3$.

\medskip
\medskip

4 \ \ \ \ \ \ \ \  \ \ $D_{14}$, $D_{12}$, $\mathrm{Hol}(C_5)$, $A_4 \times C_2$, $ \mathrm{SL}(2,3) \times C_2$, $Q_8 \times C_2$, 

\medskip

 \ \  \ \ \ \ \ \ \ \  \ \  $\langle a,b  \ | \ a^4=b^4=a^{-1}bab=1 \rangle$. 

\medskip
\medskip

5 \ \ \ \ \ \ \  \ \ \ $D_{18}$, $D_{16}$, ${(\mathrm{Aff}(\mathbb{F}_9))}^4$, $(\mathrm{Hol}(C_7))^2$, $D_{10} \times C_5$,                $\mathrm{Hol}(C_5) \times C_5$, $S_4$. 
                     
 \medskip
\medskip

6 \ \ \ \ \ \ \ \ \ \  $D_{22}$, $D_{20}$, $\mathrm{Hol}(C_7)$, $D_8 \times C_2$, 

\medskip

\ \ \ \ \ \ \ \ \ \ \ \ $\langle x,y,z \ | \ x^3=y^9=z^2= (yz)^2= [x,z]= y^5 z y z=1 \rangle$.

\medskip
\medskip

7 \ \ \ \ \ \ \ \ \ \ $D_{26}$, $D_{24}$, $\mathrm{Aff}(\mathbb{F}_8)$, $D_{14} \times C_7$, $(\mathrm{Hol}(C_7))^2 \times C_7$, $\mathrm{GL}(2,3)$, $\mathrm{Hol}(C_7) \times C_7$. 

\medskip
\medskip

8 \ \ \ \ \ \ \ \ \ \ See Theorem \ref{classificationofbg8} below.

\medskip
\medskip

9 \ \ \ \ \ \ \ \ \ \  $D_{34}$, $D_{32}$,  $(\mathrm{Hol}(C_{13}))^4$, $(\mathrm{Hol}(C_{11}))^2$, $A_4 \times C_3$, $D_6 \times C^2_3 $, $C_3 \times \mathrm{Aff}(\mathbb{F}_9)$, 

\medskip

\ \ \ \ \ \ \ \ \  \ \ \ \ $D_8 \times C^2_3$, $C_3 \times (\mathrm{Aff}(\mathbb{F}_9))^2$, $C_3 \times (\mathrm{Aff}(\mathbb{F}_9))^4$, 

\medskip

\ \ \ \ \ \ \ \ \  \ \ \ \  $K_0=C_3 \ltimes C^2_3 =\langle x,y,z \ | \  x^3 = y^3 = z^3= [x,y]=[x,z] = 1, yz = zyx \rangle$,  

\medskip

\ \ \ \ \ \ \ \ \ \ \  \ \ $K_1=C_2 \ltimes K_0$ with $Z(K_1)=C_3$,   $K_2=C_2 \ltimes K_0 $ with $Z(K_2)=1$,

\medskip

\ \ \ \ \ \ \ \ \ \ \  \ \ $K_3= C_4 \ltimes K_0$ with $Z(K_3)=C_3$. 

\medskip
\medskip

10 \ \  \ \ \ \ \ \ \ $D_{38}$, $D_{36}$, $(\mathrm{Hol}(C_{13}))^3$, $\mathrm{Hol}(C_{11})$, $D_{16} \times C_2$, $D_6 \times C^2_5$, $(\mathrm{Hol}(C_7)^2 \times C^2_2$. 

\medskip
\medskip

11 \ \ \ \ \ \ \ \ \ $D_{42}$, $D_{40}$, ${(\mathrm{Hol}(C_{13}))}^2$, ${(\mathrm{Aff}(\mathbb{F}_{16})}^5$, $C_{11} \times D_{22}$,

\medskip

\ \ \ \ \ \ \ \ \ \ \  \ \                       $ C_{11} \times {(\mathrm{Hol}(C_{11}))}^2$, $C_{11} \times \mathrm{Hol}(C_{11})$,

\medskip

\ \ \ \ \ \ \ \ \ \ \  \ \                 $ \langle a,b,c \ | \ a^3 = b^4 = c^4 = [b,c] = 1, a^{-1}ba = c,  a^{-1}ca = b^{-1}c^{-1}\rangle $.

\medskip
\medskip

As mentioned in the list above, the case of global breadth $8$ is challenging. It requires different techniques, and new ideas, so we cannot list simply the groups as made for all the other cases, but we need of a result of structure of independent interest.

\begin{theorem}[Refined Groups of Global Breadth 8, see \cite{hr3}] \label{classificationofbg8}

Let $G$ be a refined group of $\mathbf{B}(G) = 8$.

\begin{itemize}
\item[(i).] If $G$ is nilpotent, then $G$ is a $2$-group.

\item[(ii).] If $G$ possesses a noncyclic Sylow subgroup $S$ of odd order, then $|S| = 9$ and either $G \simeq \mathrm{Aff}(\mathbb{F}_9)$ or  $G \simeq D_6 \times D_6$.

\item[(iii).] If $5 \in \pi(G)$, then either $G \simeq D_{30}$, or $G \simeq \mathrm{Hol}(C_5) \times C_2$, or  $G \simeq A_5$.

\item[(iv).] If $7 \in \pi(G)$, then either $G \simeq D_{14} \times C_2$ or $G \simeq \mathrm{Aff}(\mathbb{F}_8) \times C_2$.

\item[(v).] If $\pi(G)=\{2,3\}$ and $G$ is nonabelian with  cyclic Sylow $3$-subgroup $S$ and with Sylow $2$-subgroup $T$, then $|S|=3$ and one of the following conditions is satisfied:
\begin{itemize}
\item[(1)]  $S$ is  normal and either  $T \simeq C_2 \times C_2 \times C_2$, or $T \simeq C_4 \times C_4$ ;
\item[(2)]$T$ is normal and either  $T \simeq C_2 \times C_2 \times C_2 \times C_2$, or  $T \simeq Q_8 \times C_4$;
\item[(3)] neither $S$ nor $T$  are  normal and $G=HK$ with $H \simeq A_4$,  $K \simeq C_4$ and $G/Z(G) \simeq S_4$.
\end{itemize}

\item[(vi).] There is no $G$ such that $p \in \pi(G)$ and $p \ge 11$.
\end{itemize}

\end{theorem}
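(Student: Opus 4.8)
The plan is to treat Theorem \ref{classificationofbg8} as a finite case analysis driven by a single structural input, Theorem \ref{anidea} applied with $m=\mathbf{B}(G)=8$, refined throughout by the exact breadth equation $|L_k(G)|=\sum_{d\mid k}\mathbf{c}_d\,\phi(d)=k\,\mathbf{b}_k(G)$ and the constraint $\mathbf{b}_k(G)\le 8$ for every $k\in\mathrm{Div}(\exp(G))$. First I would dispose of part (vi). By Theorem \ref{anidea}(i) no prime exceeding $2\cdot 8-1=15$ divides $|G|$, so $\pi(G)\subseteq\{2,3,5,7,11,13\}$. For $p\in\{11,13\}$ we have $8<p\le 15$, whence Theorem \ref{anidea}(ii) forces $p-8\in\mathrm{Div}(7)=\{1,7\}$; since $11-8=3$ and $13-8=5$ lie outside $\{1,7\}$, both primes are excluded and $\pi(G)\subseteq\{2,3,5,7\}$. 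This settles (vi) and fixes the arithmetic arena for everything that follows.

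Next I would isolate a parity observation that yields part (i) cleanly. For coprime $A,B$ one has $L_k(A\times B)=L_k(A)\times L_k(B)$, from which $\mathbf{B}$ is multiplicative on coprime direct factors; for nilpotent $G=\prod_{p}S_p$ this gives $\prod_p\mathbf{B}(S_p)=8$. The key point is that for an odd prime $p$ and a $p$-group $P$ the quantity $|L_{p^j}(P)|=1+\sum_{i\ge 1}\phi(p^i)\,\mathbf{c}_{p^i}$ is odd, since every $\phi(p^i)$ with $i\ge 1$ is even; hence $\mathbf{b}_{p^j}(P)=|L_{p^j}(P)|/p^j$ is odd and so $\mathbf{B}(P)$ is odd. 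As $G$ is refined, quotienting out a (normal) Sylow subgroup must change the breadth, so every $\mathbf{B}(S_p)>1$; but an odd number greater than $1$ cannot divide $8$, so no odd Sylow subgroup survives and $G$ is a $2$-group, proving (i).

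The third ingredient bounds Sylow orders and underlies parts (ii)--(v). Specializing the breadth equation to $k=p$ gives $\mathbf{b}_p(G)=\bigl(1+(p-1)\mathbf{c}_p\bigr)/p\le 8$, and to $k=p^2$ localizes the count of order-$p^2$ elements; a noncyclic odd Sylow $p$-subgroup contains $C_p\times C_p$, forcing $\mathbf{b}_p(G)$ to grow with $p$ so that only $p=3$ and $S\cong C_3\times C_3$ of order $9$ can occur, after which the coprime action on $S$ together with refinedness separates $\mathrm{Aff}(\mathbb{F}_9)$ from $D_6\times D_6$; this is part (ii). The same order bound shows that when $5$ or $7$ divides $|G|$ the corresponding Sylow subgroup is cyclic of prime order, so in parts (iii) and (iv) one studies $N_G(S_p)/C_G(S_p)\le\mathrm{Aut}(C_p)$ and reads off the admissible extensions, matching the breadth-$8$ equation against the finite candidate list to land on $D_{30},\ \mathrm{Hol}(C_5)\times C_2,\ A_5$ and on $D_{14}\times C_2,\ \mathrm{Aff}(\mathbb{F}_8)\times C_2$ respectively. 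For part (v), with $\pi(G)=\{2,3\}$, $S$ cyclic and $G$ nonabelian, the bound forces $|S|=3$, and the three sub-cases are separated according to which of $S,T$ is normal, using coprime action and the explicit value $\mathbf{B}(G)=8$ to pin down the isomorphism type of $T$.

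The main obstacle will be the third sub-case of part (v), where neither $S$ nor $T$ is normal. Here there is no normal Sylow to anchor a clean counting argument, and one must instead reconstruct $G$ from the data $G=HK$ with $H\simeq A_4$, $K\simeq C_4$ and $G/Z(G)\simeq S_4$; the delicate point is to show that the breadth-$8$ constraint is compatible with exactly this central extension and with no other, which demands careful bookkeeping of the extension rather than a direct use of the breadth equation. A secondary difficulty, pervasive throughout, is that $\mathbf{B}(G)=8$ is a maximum over all $k$, so each case requires simultaneous control of the counts $\mathbf{c}_d$ at several orders at once; keeping these inequalities consistent while excluding spurious groups of the same breadth (via refinedness) is where most of the routine but lengthy verification resides.
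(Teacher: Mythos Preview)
The paper does not give its own proof of Theorem~\ref{classificationofbg8}: this result is quoted from \cite{hr3} as part of the preliminaries in Section~2, and no argument for it appears in the text. So there is no ``paper's proof'' to compare your proposal against; any detailed verification would have to be checked against \cite{hr3} itself.

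That said, the outline you give is broadly along the right lines for how such a classification goes, and your treatment of parts (vi) and (i) is essentially complete. The reduction $\pi(G)\subseteq\{2,3,5,7\}$ via Theorem~\ref{anidea}(i),(ii) is correct, and your parity argument for (i) --- that $\mathbf{B}(P)$ is odd for any $p$-group $P$ with $p$ odd, combined with multiplicativity of $\mathbf{B}$ over coprime direct factors and the refinedness hypothesis forcing each $\mathbf{B}(S_p)>1$ --- is a clean way to force $G$ to be a $2$-group. For parts (ii)--(v) your sketch identifies the right governing inequality $\mathbf{b}_p(G)=(1+(p-1)\mathbf{c}_p)/p\le 8$ and the right organizational principle (normality or non-normality of the relevant Sylow subgroups), but what you have written there is only a plan: the actual exclusion of spurious candidates and the identification of the listed groups requires a substantial amount of explicit case-checking that you have not carried out. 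In particular, your own diagnosis is accurate that case (v)(3), with neither Sylow subgroup normal, is where a direct breadth computation no longer suffices and one must analyze the central extension carefully; that step is genuinely delicate and is not addressed beyond naming it.
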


Of course, Proposition \ref{tek1} does not help in case of simple groups, but neverthless it is useful in case we have subgroups of simple groups, which can be obtained as appropriate extensions.

\section{Proof of the First Main Theorem}

We begin to prove of our first main result, where the presence of appropriate cyclic maximal subgroups is somehow guaranteed by the simultaneous presence of nontrivial partitions. 

\begin{proof}[Proof of Theorem \ref{l0}]
Let $d\in \mathrm{Div}(G)$ and  assume that $d=p$ is prime.  According to the First Sylow Theorem, $G$ has a $p$-subgroup $H_i$ of size $p$ and  there are exactly $n_p=kp+1$ of these $p$-subgroups by the Third Sylow Theorem (for some $k \ge 1$). The existence of a nontrivial partition for $G$ implies \begin{equation} \label{npp} |L_p(G)|= \Big|\bigcup_{i=0}^{n_p}H_i \Big| = |G: N_G(H_i)| \ (p-1)+1= n_p(p-1)+1= \end{equation} 
\[=(kp+1) (p-1)+1= kp^2 - kp + p =  kp^2 +  (1-k) p .\]

\medskip
Next, suppose that $d=p^s$ for some $s \ge 2$ and that $|G|=p^sr$ such that $\gcd(p^s,r)=1$. Again by Sylow Theorems, we may consider  a Sylow $p$-subgroup $H$ of $G$ and that there are  $n' _p=kp+1$ Sylow $p$-subgroups. 
hence we have (again by the presence of nontrivial partitions)
\begin{equation} \label{nppp}|L_d(G)| \le \Big|\bigcup_{g \in G}H^g \Big| \le (kp+1)|H|=(kp+1)p^s \end{equation} and the equality is achieved if and only if $s=1$, i.e.: when  $H$ is cyclic of order $p$.

If $M$ is a maximal cyclic subgroup of $G$ of order a power of $p$, then by Sylow Theorems $M$ should be contained in some Sylow $p$-subgroup of $G$. If $M$ has order different from a power of $p$, then we may find $M$ in more than one Sylow subgroup of $G$. Therefore we shall examine different cases on the basis of  $|M|$.

\begin{itemize}
\item [Case 1.] Assume that all maximal cyclic subgroups of $G$ have prime order. Then $G$ does not contain an element of order a prime power $d=p^t$ for $2 \le t \le n$ and $p \in \pi(G)$. The elements of the sets $L_p(G)$ and $L_{p^2}(G)$ are therefore identical. Thus \begin{equation} \label{00} 
 L_p(G)=L_{p^2}(G)= \cdots = L_{p^t}(G)= \cdots= L_{p^n}(G). \end{equation}
 In particular, for $p^2, p^3$ and so on we have by \eqref{npp} that 
\begin{equation} \mathbf{b}_{p^2}(G)=\frac{|L_{p^2}(G|}{p^2}   = \frac{kp + (1-k)}{p}  < kp+1-k = \frac{|L_p(G)|}{p}=\mathbf{b}_{p}(G) \end{equation}  and so on $\ldots < \mathbf{b}_{p^3}(G) < \mathbf{b}_{p^2}(G)  < \mathbf{b}_p(G)$. 
Additionally $G$ does not contain  elements of order  $d=pq$ for distinct primes $p,q \in \pi(G)$, then it is easy to check that  
\begin{equation} \label{eqpppp}
   L_{d}(G)\supseteq L_p(G) \ \ \mbox{and} \ \  L_{d}(G)\supseteq L_q(G).
   \end{equation}
 However, we may consider the interval $[C_{pq}/1]$ of the subgroups lattice
 $\mathrm{L}(G)$ (see details in \cite[\S 1.1, pp.5--10]{rs}) and using \eqref{phib} we have $\mathbf{c}_{pq}=0$ and
 \begin{equation} \label{eqpppp2}
   \mathbf{b}_{pq}(G)=\frac{L_{pq}(G)}{pq}={\underset{m | pq} \sum}  \mathbf{c}_m\phi(m) = \frac{1+\mathbf{c}_p(p-1)+\mathbf{c}_q(q-1)}{pq} \end{equation}
   $$= \frac{1+\mathbf{c}_p(p-1)}{pq}+ \frac{\left(1+\mathbf{c}_q(q-1)\right)-1}{pq}
   =
   \frac{\mathbf{b}_{p}(G)}{q}+\frac{\mathbf{b}_{q}(G)}{p}-\frac{1}{pq}$$ $$
 \ \ \Longrightarrow \ \ \mathbf{b}_p(G) \geq \mathbf{b}_{d}(G) \ \ \mbox{and} \ \  \mathbf{b}_q(G) \geq \mathbf{b}_{d}(G). $$
The same argument is  valid when $d$ is the product of more than two distinct primes, say $d=pqr$ with $\gcd(p,q,r)=1$, and so on. Therefore $\mathbf{B}(G)$ is reached by $\mathbf{b}_{|M|} (G)$  for some maximal subgroup $M$ of prime order.

\medskip
\medskip

 \item[Case 2.] Assume that  some maximal cyclic subgroup $M$ of $G$ is  either isomorphic to a cyclic subgroup $C_{p^t}$ of order $p^t$ for some $t \ge 1$, or  is isomorphic to $C_k =C_{p_1^{t_1}} \times  C_{p_2^{t_2}} \times \ldots \times C_{p_r^{t_r}}$, where  $k=p_1^{t_1} p_2^{t_2} \ldots p_r^{t_r}$ is a factorization of $k \ge 1$ in prime powers such that $\gcd(p_i, p_j)=1$ for all $i,j \in \{1, 2, \ldots, r\}$ and $r \ge 1$.   Then one can check that both the lattice of all subgroups $\mathrm{L}(C_{p^t})$ of $M \simeq C_{p^t}$  and that  $\mathrm{L}(C_{k})$  of $M \simeq C_k$ are distributive sublattices of $\mathrm{L}(G)$, where in the first case we have also a chain. It is  evident from counting the elements of $G$, that $G$ does not contain any elements of order $d > p^t$ in case $|M|=p^t$ and $d > k$ when $|M|=k$. This indicates that $L_d(G)=L_{p^t}(G)$ and  $L_{d}(G)= L_{k}(G). $ 
 Arguing as in Case 1, we get
\begin{equation} \mathbf{b}_d(G)=\frac{|L_{p^t}(G|}{d}<\frac{|L_{p^t}(G)|}{p^t}=\mathbf{b}_{p^t}(G),   \ \mbox{and}  \ \ 
\mathbf{b}_d(G)=\frac{|L_{k}(G|}{d}< \frac{|L_k(G)|}{k}=\mathbf{b}_{k}(G).
\end{equation} 
Lastly, it remains to check $\mathbf{b}_{d}(G)$ when
 $d < p^t$  for $M\simeq C_{p^t}$ and $d < k$ for $M\simeq C_{k}$.  Now consider $M\simeq C_{p^t}$ and remember that we are in a chain where we may consider  the interval $[C_{p^t}/1]$  and using \eqref{phib} we have $\mathbf{c}_p=\mathbf{c}_{p^2}=\cdots =\mathbf{c}_{p^t}=\mathbf{c}_d.$ This means that the ratio of numbers of elements of $G$  of order exactly  $d=p^i$ divided by  $\phi(p^i)$ is constant for every divisor  of $p^t$. Therefore, 
\begin{equation}
\mathbf{b}_{p^t}(G)=\frac{\underset{d | p^t}{\sum} \mathbf{c}_d \phi(d)}{p^t} = \frac{1}{p^t} \underset{d | p^t}{\sum}  \ \mathbf{c}_d  \ \phi(d) = \frac{1}{p^t} \left(1+\underset{d \neq 1} {\underset{d|p^t}{\sum}} \mathbf{c}_d  \ \phi(d)\right)= \frac{1}{p^t} \left(1+\mathbf{c}_d   \ {\overset{t}{\underset{i=1} \sum}} \phi(p^i)\right)\end{equation}  
\[= \frac{\mathbf{b}_{p}(G)}{p^{t-1}} + \frac{\mathbf{c}_d}{p^t} \left( \underset{1 \neq p\neq d} {\underset{d | p^t} \sum}  \phi(d) \right)=\frac{\mathbf{b}_{p^{t-1}}(G)}{p}+\mathbf{b}_p(G)-\frac{1}{p}.\]
From what we wrote above we may conclude that  $\mathbf{b}_{p^t}(G) \geq \mathbf{b}_{p}(G)$, but actually
$\mathbf{b}_{p^t}(G)\geq \mathbf{b}_{p^{t-1}}(G) \geq \cdots \geq\mathbf{b}_{p}(G).$ We end with the case of $M \simeq C_{p^t}$ and it remains to consider the case of $M \simeq C_k$, but the same argument can be applied, since we may consider the interval $[C_k/1]$ and divisors of the form $d=p_i^{t_i}$ of $k$. Note that  $\mathrm{L}(C_{k})$ is a complete sublattice (but generally is not a chain) and so we get to $\mathbf{b}_{k}(G) \geq \mathbf{b}_{p_i^{t_i}}(G).$ 
Thus, if $M \simeq C_{p^t}$ or $M \simeq C_k$, $\mathbf{b}_{d}(G)$ may attain its maximum if either $d=p^t$ or $d=k$, assuming it does not occur at the cyclic subgroup of order prime. 
\end{itemize} 
Since Case 1 and Case 2 exhaust all the possible options, the result follows.
\end{proof}

We now  apply the arguments of the proof of Theorem \ref{l0} to some relevant groups, in order to have a partial version of  Theorem \ref{maintheorem}.

\begin{corollary}\label{l1}For $q=3^n$ with $n \ge 2$ or for all odd prime powers $q \ge 5$
 \begin{equation} \label{pslodd}
 \mathbf{B}(\mathrm{PSL}(2,q)) + 2 = \frac{q(q-1)}{2},
 \end{equation} but in case $q=2^m$ (with $m \ge 2$)
 \begin{equation}\mathbf{B}(\mathrm{PSL}(2,q)) = 2^{2m-1}.
 \end{equation}
\end{corollary}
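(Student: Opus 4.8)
The plan is to use Theorem~\ref{l0}: since $\mathrm{PSL}(2,q)$ has a nontrivial partition (by Theorem~\ref{niceclassification}(v)), the global breadth is realized as $\mathbf{b}_{|M|}$ for a maximal cyclic subgroup $M$. So the task reduces to (a) listing the conjugacy classes of maximal cyclic subgroups of $\mathrm{PSL}(2,q)$, (b) computing $\mathbf{b}_{|M|}$ for each via the formula $\mathbf{b}_k(G)=|L_k(G)|/k$ together with \eqref{phib}, and (c) taking the maximum. I would split into the two stated cases $q$ odd and $q=2^m$ because the Sylow $2$-structure (and hence the partition) differs.

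First I recall the standard subgroup structure. For $q$ odd, $\mathrm{PSL}(2,q)$ has a partition into: $\frac{q(q+1)}{2}$ Sylow $p$-subgroups (each elementary abelian of order $q$, where $p=\mathrm{char}\,\mathbb{F}_q$), a conjugacy class of cyclic subgroups of order $\frac{q-1}{2}$ (the images of split tori, $\frac{q(q+1)}{2}$ of them), and a conjugacy class of cyclic subgroups of order $\frac{q+1}{2}$ (images of nonsplit tori, $\frac{q(q-1)}{2}$ of them). The maximal cyclic subgroups are thus the Sylow $p$-subgroup only when $q=p$ prime (otherwise the Sylow $p$-subgroup is noncyclic and contributes nothing useful), together with the cyclic subgroups of order $\frac{q-1}{2}$ and $\frac{q+1}{2}$ — except for small degeneracies when $\frac{q\pm1}{2}$ is a prime power dividing the other data. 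The key computation is $\mathbf{b}_{(q-1)/2}(\mathrm{PSL}(2,q))$: since the cyclic subgroups of order $\frac{q-1}{2}$ are pairwise trivial-intersecting and self-centralizing, $|L_{(q-1)/2}(G)| = 1 + \frac{q(q+1)}{2}\bigl(\frac{q-1}{2}-1\bigr)$, giving
\begin{equation}
\mathbf{b}_{(q-1)/2}(G)=\frac{1+\frac{q(q+1)}{2}\cdot\frac{q-3}{2}}{\frac{q-1}{2}}=\frac{q(q+1)(q-3)+4}{4(q-1)}=\frac{(q-1)(q^2-2q-4)+\ldots}{4(q-1)}.
\end{equation}
After carrying out the division one gets $\mathbf{b}_{(q-1)/2}(G)=\frac{q(q+1)}{4}\cdot\frac{q-3}{q-1}$ plus a correction, and the cleanest route is to verify directly that this equals $\frac{q(q-1)}{2}-2$; an analogous but slightly larger count for the order-$\frac{q+1}{2}$ class must then be checked to give a value no larger, so that the maximum over all maximal cyclic subgroups is indeed $\frac{q(q-1)}{2}-2$. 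The restriction $q=3^n,n\ge2$ or $q\ge5$ is exactly what is needed so that $\frac{q-1}{2}$ and $\frac{q+1}{2}$ are genuinely the orders attaining the maximum (ruling out $q=3$ where $\mathrm{PSL}(2,3)\simeq A_4$, and handling the $q=5,7,9$ boundary cases by direct inspection if necessary).

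For $q=2^m$ with $m\ge2$, the structure is simpler: $\mathrm{PSL}(2,2^m)=\mathrm{SL}(2,2^m)$ has a partition into one class of Sylow $2$-subgroups (elementary abelian of order $q=2^m$, hence noncyclic for $m\ge2$), a class of cyclic subgroups of order $q-1$, and a class of cyclic subgroups of order $q+1$. The maximal cyclic subgroups are precisely the cyclic groups of order $q-1$ (there are $q(q+1)$ of them, each self-centralizing and malnormal) and of order $q+1$ (there are $\frac{q(q-1)}{2}$ of them). One computes $|L_{q-1}(G)| = 1 + q(q+1)(q-2)$ and $|L_{q+1}(G)| = 1 + \frac{q(q-1)}{2}\cdot q$, hence
\begin{equation}
\mathbf{b}_{q-1}(G)=\frac{1+q(q+1)(q-2)}{q-1},\qquad \mathbf{b}_{q+1}(G)=\frac{1+\tfrac{q^2(q-1)}{2}}{q+1}=\frac{q^2}{2}-\frac{q}{2}+\ldots
\end{equation}
and one checks that $\mathbf{b}_{q+1}(G)=\frac{q(q-1)}{2}$ while $\mathbf{b}_{q-1}(G)$ comes out strictly smaller (for $q\ge4$ one has $\frac{1+q(q+1)(q-2)}{q-1}<\frac{q(q-1)}{2}$ is false — actually $\mathbf{b}_{q-1}$ is the larger one, so I would compare the two exact values carefully). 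Whichever of $q\pm1$ wins, the arithmetic is elementary; the claim $\mathbf{B}(\mathrm{PSL}(2,2^m))=2^{2m-1}=\frac{q^2}{2}$ should fall out, and I would double-check it against $q=4$ where $\mathrm{PSL}(2,4)\simeq A_5$ and $\mathbf{B}(A_5)=8=\frac{16}{2}$, which matches Theorem~\ref{classificationofbg8}(iii).

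The main obstacle is not conceptual but bookkeeping: one must be certain that the enumeration of maximal cyclic subgroups is complete and that no other divisor $k$ of $\exp(G)$ — in particular composite $k$ built from primes dividing different tori, or prime-power $k$ inside the noncyclic Sylow $p$-subgroup — yields a larger $\mathbf{b}_k$. Theorem~\ref{l0} formally guarantees this (the maximum is attained at a maximal cyclic subgroup), so the real work is just the two or three clean divisions above plus disposing of the handful of exceptional small $q$ by hand. The final sentence ``$\mathrm{PSL}(2,q)\in\mathcal H$ for all $q\ge4$'' then follows by comparing $|\mathrm{PSL}(2,q)|=\frac{q(q^2-1)}{\gcd(2,q-1)}$ with $\mathbf{B}(\mathbf{B}+1)$ using the formulas just derived, which is a one-line inequality in $q$.
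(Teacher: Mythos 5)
Your overall strategy is the same as the paper's: invoke Theorem~\ref{l0} to reduce to maximal cyclic subgroups, then count $|L_k|$ via the partition into Sylow $p$-subgroups and the two classes of tori. The odd case is essentially right (your count $|L_{(q-1)/2}|=1+\tfrac{q(q+1)}{2}\cdot\tfrac{q-3}{2}=\tfrac{q^3-2q^2-3q+4}{4}$ matches the paper and does yield $\tfrac{q(q-1)}{2}-2$ after dividing by $\tfrac{q-1}{2}$). But there is a genuine gap in the even case: your list of maximal cyclic subgroups of $\mathrm{PSL}(2,2^m)$ omits the subgroups of order $2$. For $m\ge 2$ the Sylow $2$-subgroups are elementary abelian and noncyclic, but their order-$2$ subgroups \emph{are} maximal cyclic in $G$ (the centralizer of an involution is its Sylow $2$-subgroup, of exponent $2$), and Theorem~\ref{l0} only guarantees the maximum is attained at \emph{some} maximal cyclic order — here it is attained precisely at the class you dropped. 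Indeed $|L_2|=1+(q+1)(q-1)=q^2$ gives $\mathbf{b}_2=2^{2m-1}$, whereas your two candidates compute to $\mathbf{b}_{q-1}=\tfrac{q^2-2}{2}=2^{2m-1}-1$ and $\mathbf{b}_{q+1}=\tfrac{q^2-2q+2}{2}$, both strictly less than $2^{2m-1}$. So the maximum over your (incomplete) list is $2^{2m-1}-1$, not the claimed $2^{2m-1}$; your own sanity check at $q=4$ would have exposed this, since $\mathbf{B}(A_5)=8$ while $\mathbf{b}_3(A_5)=7$ and $\mathbf{b}_5(A_5)=5$. The same omission occurs in the odd case for $q=p^n$, $n\ge2$ (the order-$p$ maximal cyclic subgroups give $\mathbf{b}_p=q^2/p$), though there it happens not to affect the answer since $q^2/p<\tfrac{q(q-1)}{2}-2$.

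There are also two miscounts you should fix: $\mathrm{PSL}(2,q)$ has $q+1$ Sylow $p$-subgroups, not $\tfrac{q(q+1)}{2}$; and for $q=2^m$ there are $\tfrac{q(q+1)}{2}$ cyclic subgroups of order $q-1$ (normalizer dihedral of order $2(q-1)$), not $q(q+1)$ — with your count $|L_{q-1}|=1+q(q+1)(q-2)$ is not even $\equiv 1 \pmod{q-1}$ at $q=4$. Once the order-$2$ class is restored and these counts corrected, the comparison of the three (resp.\ three) local breadths closes the argument exactly as in the paper's proof.
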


\begin{proof}First of all, we know that $|\mathrm{PSL}(2,2^m)|=2^m \ (2^{2m}-1)$ and $|\mathrm{PSL}(2,q)|=q(q^2-1)/\gcd(2,q-1)$. Counting the involutions in $\mathrm{PSL}(2,2^m)$ we get \begin{equation}|L_2(\mathrm{PSL}(2,2^m))|=|\{x \in \mathrm{PSL}(2,2^m) \ | \ x^2=1\}|=2^{2m}.
\end{equation} This means that \begin{equation}\mathbf{b}_2(\mathrm{PSL}(2,2^m))=2^{2m-1}.
\end{equation}
Counting the elements of order $(q-1)/2$ in $\mathrm{PSL}(2,q)$ when $q$ is odd, we get
\begin{equation}|L_{\frac{q-1}{2}}(\mathrm{PSL}(2,q))|=|\{x \in \mathrm{PSL}(2,q) \ | \ x^{\frac{q-1}{2}}=1\}|=\frac{q^3-2q^2-3q+4}{4}
\end{equation}
and so \begin{equation}\mathbf{b}_\frac{q-1}{2}(\mathrm{PSL}(2,q))|=\frac{q(q-1)}{2}-2.
\end{equation}
We want to give more details on the nature of these computations, showing that the argument of the proof of Theorem \ref{l0} may be specialized in a significant way here. For the rest of this proof, let's denote by $q$ a power of a prime, which can be eventually $2$. We should check that among $\mathbf{b}_e(\mathrm{PSL}(2,q))$ with $e \in \mathrm{Div}(\mathrm{PSL}(2,q))$ the value $\mathbf{b}_2(\mathrm{PSL}(2,q))$ indeed realizes the global breadth $\mathbf{B}(\mathrm{PSL}(2,q))$ when $q$ is even, and the above value $\mathbf{b}_\frac{q-1}{2}(\mathrm{PSL}(2,q))$ indeed realizes the global breadth $\mathbf{B}(\mathrm{PSL}(2,q))$ when $q$ is odd.  This is going to support the relevance of the role of the cyclic maximal subgroups in these computations.

Of course $\mathrm{PSL}(2,q)$ is not simple group when $q=2$ and $q=3$. So, let $q=3^n$ for $n \ge 2$ or $q$ be odd prime powers, with $q \ge 5$ and in case $q=2^m$, with $m \ge 2$.  

Assume $e=p \in \pi(\mathrm{PSL}(2,q))$. Then, we look at the number of Sylow subgroups in $\mathrm{PSL}(2,q)$ and repeat the argument of the first part of the proof Theorem \ref{l0}. By the First Sylow Theorem $\mathrm{PSL}(2,q)$ has $p$-subgroup $P_i$ of size $p$ and with $n_p$  number of these $p$-subgroups we are able to find that $L_p(\mathrm{PSL}(2,q))$ has $|L_p(\mathrm{PSL}(2,q))|-1$ nontrivial elements of order $p$, each $P_i$ has $p-1$ nontrivial elements of order $p$ and $P_i\cap P_j=\{1\}$ for all $i\ne j$, by the presence of a nontrivial partition for $\mathrm{PSL}(2,q)$.  Then 
\begin{equation} \label{np}
|L_p(\mathrm{PSL}(2,q))|= \Big|\bigcup_{i=0}^{n_p}P_i \Big|= n_p(p-1)+1= |G : N_G(P_i)| \ (p-1)+1=kp^2+kp+p+1.
\end{equation}

Arguing again as in the first part of the proof of Theorem \ref{l0}, we now look at  $e=p^s$ with $|\mathrm{PSL}(2,q)|=p^sr$ such that $\gcd(p^s,r)=1$. Let $P$ is a Sylow $p$-subgroup of $\mathrm{PSL}(2,q)$ and it is not normal. In \cite [Example 3.5.1]{rs}, it is established that the set of maximal cyclic subgroups is a partition of $\mathrm{PSL}(2,q))$. Then using Theorem \ref{l0}, it is more appropriate to focus on the set of maximal cyclic subgroups rather than the Sylow $p$-subgroups of $\mathrm{PSL}(2,q)$, when calculating the size of $ L_e(\mathrm{PSL}(2,q))$, which can be achieved by applying \eqref{phib}.

First of all consider $q=2^m$.  Note that $\mathrm{PSL}(2,q)$ does not have an element of order $e=2^r$ if $2\le r \leq n$. Then we still have a chain of equalities which we discussed in the argument of the proof of Theorem \ref{l0}, namely
$L_2(\mathrm{PSL}(2,q))=L_{4}(\mathrm{PSL}(2,q))= \ldots = L_{2^t}(\mathrm{PSL}(2,q))= \ldots= L_{2^n}(\mathrm{PSL}(2,q))$ and in particular for $e=4$ 
\begin{equation}
\mathbf{b}_e(\mathrm{PSL}(2,q))=\frac{|L_e(\mathrm{PSL}(2,q))|}{4} < \frac{|L_e(\mathrm{PSL}(2,q))|}{2}=\mathbf{b}_{p}(\mathrm{PSL}(2,q)).
\end{equation}
However, if $q=2^m$, one can check that $\mathrm{PSL}(2,q)$ has an element of order $e$ if $e=2$. This means that there is a maximal cyclic $2$-subgroup  $Q$ contained in the Sylow $2$-subgroup $P$ of $\mathrm{PSL}(2,q)$.  Therefore, for $e=2^s$,   we have
\begin{equation}
\mathbf{b}_e(\mathrm{PSL}(2,q))=\frac{|L_e(\mathrm{PSL}(2,q))|}{2^t}=\frac{|L_2(\mathrm{PSL}(2,q))|}{2^t} < \frac{|L_2(\mathrm{PSL}(2,q))|}{2}=\mathbf{b}_{2}(\mathrm{PSL}(2,q)).
\end{equation}

Again, since the maximal cyclic subgroups of $\mathrm{PSL}(2,q)$ has order $2$, $\frac{2^m-1}{2}$ and $\frac{2^m+1}{2}$ for $q=2^m$, then, it is easy to check that for $e=p_1p_2$ with $e\in \mathrm{Div}(\frac{2^m-1}{2})$ or $e\in \mathrm{Div}(\frac{2^m+1}{2})$ we get   
\begin{equation} \label{eqcompa2}
     L_{p_1}(\mathrm{PSL}(2,q))\subseteq L_{e}(\mathrm{PSL}(2,q)) \ \ \mbox{and} \ \  L_{p_2}(\mathrm{PSL}(2,q))\subseteq L_{e}(\mathrm{PSL}(2,q)) 
\end{equation} and so
\begin{equation}
\mathbf{b}_{p_1}(\mathrm{PSL}(2,q)) \leq \mathbf{b}_{e}(\mathrm{PSL}(2,q)) \ \ \mbox{and} \ \  \mathbf{b}_{p_2}(\mathrm{PSL}(2,q)) \leq \mathbf{b}_{e}(\mathrm{PSL}(2,q)). 
\end{equation}
In particular, if $e=2^m-1$  and $e=2^m+1$, then we have
\begin{equation}
    \mathbf{b}_{2^m-1}(\mathrm{PSL}(2,q))= 2^{2m-1}-1\leq 2^{2m-1}
\ \ \mathrm{and} \ \ 
    \mathbf{b}_{2^m+1}(\mathrm{PSL}(2,q))= 2^{2m-1}-(2^m+1)\leq 2^{2m-1}.
\end{equation}
Note that $(\mathrm{PSL}(2,q)$ does not have also an element of order $e\ge 2^{m}-1$ and $e\ge 2^m+1$, and the above inequality holds true by applying Theorem \ref{l0}.
Therefore it is enough to check only $\mathbf{b}_e(\mathrm{PSL}(2,q))$ at $e=2$, $e=\frac{2^m-1}{2}$ and $e=\frac{2^m+1}{2}$ to obtain $\mathbf{B}(\mathrm{PSL}(2,2^m))$. 
Therefore, $\mathbf{b}_e(\mathrm{PSL}(2,q))$  reaches the maximum when $e=2$, in case $q=2^m$.

Finally, we adapt the final step of the proof of Theorem \ref{l0} also to the present situation. Now we assume $q=p^n$ for the odd prime $p$ with $n\geq 1$ and, in the case $p=3$,  $n\geq 2$. Similarly, $\mathrm{PSL}(2,q)$ does not have an element of order $e=p^t$ for any odd prime $p$ if $2\leq t \leq n$. This implies,
\begin{equation}
\mathbf{b}_e(\mathrm{PSL}(2,q))=\frac{|L_e(\mathrm{PSL}(2,q))|}{p^t} < \frac{|L_e(\mathrm{PSL}(2,q))|}{p}=\mathbf{b}_{p}(\mathrm{PSL}(2,q)) \ \ \mbox{for} \ \ 2\leq t\leq n.
\end{equation}
with $\mathbf{b}_{p}(\mathrm{PSL}(2,q))=q.$    In addition, $\mathrm{PSL}(2,q)$ has an element of order $e$ for all $e\in \mathrm{Div}((q-1)/2)$ and $e\in \mathrm{Div}((q+1)/2)$.  With the same argument of the previous case, it is important to compute $\mathbf{b}_{e}(\mathrm{PSL}(2,q))$ at $e=\frac{q-1}{2}$ and $e=\frac{q+1}{2}$. Thus, for $q\geq 5$
\begin{equation}
    \mathbf{b}_{\frac{q+1}{2}}(\mathrm{PSL}(2,q))= \frac{q^2-2(q+1)}{2}\leq  \frac{q^2-q-4}{2}=\mathbf{b}_{\frac{q-1}{2}}(\mathrm{PSL}(2,q)). 
\end{equation} 
To end the proof, it remains to compute only $\mathbf{b}_e(\mathrm{PSL}(2,q))$ if $e\geq q-1$ and $e\ge q+1$, with $e\in \mathrm{Div}(\mathrm{PSL}(2,q))$. In this case  $(\mathrm{PSL}(2,q)$ has no element of order $e$, hence
\begin{equation}
    \mathbf{b}_e(\mathrm{PSL}(2,q)) \leq 2^{2m-1}.
\end{equation} 
Therefore, $\mathbf{b}_e(\mathrm{PSL}(2,q))$  reaches the maximum when $e=\frac{q-1}{2}$, in case $q$ is a prime power except $q=3$.  The result follows completely.  
\end{proof}

The remaining two corollaries of the present section illustrate that the argument of the proof of Theorem \ref{l0} can be specialized (not only to the case of $\mathrm{PSL}(2,q)$ as we have just done) to the groups with nontrivial partitions in Theorem \ref{niceclassification} (vi) and (vii). We sketch the main ideas.

\begin{corollary}\label{l2} Let $q=2^{2n+1}$ for $n \ge 1$. Then,
\begin{equation}\mathbf{B}(\mathrm{Sz}(2^{2n+1}))= \frac{q^4-q^3-2(q+1)}{2}.
 \end{equation} 
\end{corollary}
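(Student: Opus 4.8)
The plan is to mimic the structure of the proof of Corollary~\ref{l1}, replacing $\mathrm{PSL}(2,q)$ by $\mathrm{Sz}(q)$ with $q=2^{2n+1}$ and exploiting the well-known fact (see \cite[Example 3.5.1]{rs} and \cite[Kapitel~XI]{huppert}) that the maximal cyclic subgroups of a Suzuki group form a nontrivial partition. Concretely, I would first recall that $|\mathrm{Sz}(q)| = q^2(q^2+1)(q-1)$ and that $\mathrm{Sz}(q)$ has exactly four conjugacy classes of maximal cyclic subgroups: a Sylow $2$-subgroup $Q$ of order $q^2$ and exponent $4$ (so $\exp(\mathrm{Sz}(q))$ involves only the prime power $4$ at $2$), a cyclic subgroup $A_0$ of order $q-1$, and two cyclic subgroups $A_1, A_2$ of orders $q-\sqrt{2q}+1$ and $q+\sqrt{2q}+1$, all self-normalizing up to the obvious small normalizers. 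Since Theorem~\ref{l0} applies, $\mathbf{B}(\mathrm{Sz}(q))$ is attained at $\mathbf{b}_{|M|}$ for one of these maximal cyclic $M$, so it suffices to compute the four values $\mathbf{b}_{|M|}(\mathrm{Sz}(q))$ and see which is largest.

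Next I would carry out the counting for each of the four candidate divisors, using \eqref{phib} together with the partition property (distinct maximal cyclic subgroups of a fixed isomorphism type intersect trivially). For a divisor $e$ dividing $q-1$ (resp.\ $q\pm\sqrt{2q}+1$), the set $L_e(\mathrm{Sz}(q))$ consists of $1$ together with the nontrivial elements lying in the conjugates of the corresponding maximal cyclic subgroup whose order divides $e$; the number of conjugates is $|\mathrm{Sz}(q):N_{\mathrm{Sz}(q)}(M)|$, which is a clean index computable from the known subgroup structure. This yields, for $e=q-1$, a value of $\mathbf{b}_{q-1}(\mathrm{Sz}(q))$ that (as for $\mathrm{PSL}(2,q)$, up to the normalization by $e$) should work out to roughly $|\mathrm{Sz}(q)|/(q-1)=q^2(q^2+1)$ minus a lower-order correction; the precise arithmetic is what I expect to produce the stated $\tfrac{q^4-q^3-2(q+1)}{2}$ — note the denominator $2$ signals that the relevant normalizer index or the structure of $N_{\mathrm{Sz}(q)}(A_0)$ (which has order $2(q-1)$) enters, exactly analogous to the $\gcd(2,q-1)$ phenomenon in $\mathrm{PSL}(2,q)$. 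For $e$ a power of $2$ one argues as in Corollary~\ref{l1}: $\mathrm{Sz}(q)$ has no elements of order $2^t$ for $t\ge 3$, so $L_2=L_4=L_8=\cdots$ and dividing the fixed numerator $|L_2(\mathrm{Sz}(q))|=|L_4(\mathrm{Sz}(q))|$ (the number of $2$-elements, which is $q^2$ times the number of Sylow $2$-subgroups plus a correction, i.e.\ of order $q^4$) by $2$ rather than $4$ shows $\mathbf{b}_4<\mathbf{b}_2$, and in fact this $2$-part value is of order $q^3$, smaller than the order-$q^4$ value coming from $e=q-1$.

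Then I would dispose of all remaining divisors $e$: for $e$ a nontrivial divisor of $q-1$ (or of $q\pm\sqrt{2q}+1$) that is not the full order, the inclusions $L_{p_i}(\mathrm{Sz}(q))\subseteq L_e(\mathrm{Sz}(q))$ inside the distributive interval $[M/1]$ of $\mathrm{L}(\mathrm{Sz}(q))$ give $\mathbf{b}_e \le \mathbf{b}_{|M|}$ exactly as in Case~2 of Theorem~\ref{l0}; for $e$ not dividing any of $q^2$, $q-1$, $q\pm\sqrt{2q}+1$, there is no element of that order so $L_e$ collapses to a smaller $L_{e'}$ and the bound $\mathbf{b}_e\le\mathbf{B}(\mathrm{Sz}(q))$ follows again from Theorem~\ref{l0}. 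Comparing the three nontrivial values $\mathbf{b}_{q-1}$, $\mathbf{b}_{q-\sqrt{2q}+1}$, $\mathbf{b}_{q+\sqrt{2q}+1}$ (the first is largest since $q-1 > q+\sqrt{2q}+1$ fails but the normalizer index is what dominates — here one checks the index $|\mathrm{Sz}(q):N(A_0)| = \tfrac{q^2(q^2+1)}{2}$ beats the indices $q^2(q-1)(q\mp\sqrt{2q}+1)/4$ for the Zsigmondy tori) pins down the maximum at $e=q-1$, giving the asserted formula. The main obstacle I anticipate is purely bookkeeping: getting the normalizer orders and the inclusion–exclusion over conjugates exactly right so that the correction term comes out as $-(q+1)$ and the factor $\tfrac12$ is correctly placed; the partition hypothesis makes the inclusion–exclusion trivial (pairwise trivial intersections), but one must be careful that elements of order dividing $q-1$ in $\mathrm{Sz}(q)$ all lie in conjugates of the single torus $A_0$ and not elsewhere, which is where the explicit description of $\mathrm{Sz}(q)$ from \cite{huppert, suzuki} is essential.
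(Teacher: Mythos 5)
Your plan follows the same route as the paper: reduce via Theorem \ref{l0} to the orders of the maximal cyclic subgroups of $\mathrm{Sz}(q)$ --- namely $4$, $q-1$, $q-2^{n+1}+1$ and $q+2^{n+1}+1$ (note $2^{n+1}=\sqrt{2q}$) --- compute the corresponding local breadths from the partition property and the normalizer indices, and observe that $\mathbf{b}_{q-1}(\mathrm{Sz}(q))=\tfrac{q^4-q^3-2(q+1)}{2}$ is the largest. These are exactly the four values the paper records before concluding.

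There is, however, one step in your plan that fails as written: the treatment of the $2$-part. You correctly note that the Sylow $2$-subgroups of $\mathrm{Sz}(q)$ are Suzuki $2$-groups of exponent $4$, but you then assert that $\mathrm{Sz}(q)$ has no elements of order $2^t$ for $t\ge 3$ and conclude $L_2=L_4=\cdots$; the correct conclusion is only $L_4=L_8=\cdots$, while $L_2\subsetneq L_4$ because there are many elements of order exactly $4$. Since the $q^2+1$ Sylow $2$-subgroups pairwise intersect trivially, one gets $|L_4(\mathrm{Sz}(q))|=(q^2+1)(q^2-1)+1=q^4$, hence $\mathbf{b}_4(\mathrm{Sz}(q))=q^4/4$, which is of order $q^4$ and not $q^3$ as you claim. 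The comparison with $\mathbf{b}_{q-1}\approx q^4/2$ therefore still goes your way, but it is genuinely close and must be checked explicitly (one needs $q^4/4<\tfrac{q^4-q^3-2(q+1)}{2}$ for $q\ge 8$) rather than dismissed as a lower-order contribution. With that correction, the rest of your bookkeeping (normalizer orders $2(q-1)$ and $4(q\mp 2^{n+1}+1)$, together with the factorization $q^2+1=(q-2^{n+1}+1)(q+2^{n+1}+1)$) delivers the stated formula exactly as in the paper.
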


\begin{proof} Arguing as in the proof of Theorem \ref{l0}, we consider $q=2^{2n+1}$ for $n \ge 0$ and know that $|\mathrm{Sz}(q)|= q^2(q^2+1)(q-1)$. Counting the elements of order $q-1$ in $\mathrm{Sz}(q)$, we get
\begin{equation}
|L_{q-1}(\mathrm{Sz}(q))|=|\{x \in \mathrm{Sz}(q) \ | \ x^{q-1}=1\}|=(q-1) \ \ \left(\frac{q^4-q^3-2(q-1)}{2}\right)
\end{equation}
and so \begin{equation}\mathbf{b}_{q-1}(\mathrm{Sz}(q))=\frac{q^4-q^3-2(q+1)}{2}.
\end{equation}
Of course if $n=0$, then $\mathrm{Sz}(2)$ is a group of order 20, which is nonsimple. Assume $q=2^{2n+1}$ for $n \ge 1$ and we should check that among $\mathbf{b}_c(\mathrm{Sz}(q))$ with $c\in \mathrm{Div}(\mathrm{Sz}(q))$ the value $\mathbf{b}_{q-1}(\mathrm{Sz}(q))$ indeed realizes the global breadth $\mathbf{B}(\mathrm{Sz}(q))$. We follow the same procedure of the proof of Corollary \ref{l1}, omitting the details. We just report the following computations, which are useful to identify the main local breadths that one needs. In fact we  only need to check the additional values $\mathbf{b}_{c}(\mathrm{Sz}(q))$ when $c=4$, $c=q+2^{n+1} + 1$ and $c=q-2^{n+1} + 1$, that is, 
\begin{equation}
    \mathbf{b}_4(\mathrm{Sz}(q))=  2^{6n+4}, \ \ \ \ \ \ 
    \mathbf{b}_{q-2^{n+1}+1}(\mathrm{Sz}(q))=  2^{6n+4}+2^{4n+3}+2^{2n+1}+2^{n+1}+1,
\end{equation}  
\begin{equation}
\mathbf{b}_{q+2^{n+1} + 1}(\mathrm{Sz}(q))= 2^{6n+3}+2^{2n+1}-2^{n+1}+1.
\end{equation}
Therefore we get what we claimed for $q=2^{2n+1}$  with $n \geq 1$, namely \begin{equation}\mathbf{B}(\mathrm{Sz}(q)) = \frac{q^4-q^3-2(q+1)}{2}=\:\left(2^{2n+1}\right)^4-\left(2^{2n+1}\right)^3-2\left(2^{2n+1}+1\right).\end{equation} 
\end{proof}

\begin{corollary}\label{l3} For all odd prime powers $q \ge 5$, we get
 \begin{equation}\mathbf{B}(\mathrm{PGL}(2,q)) = \frac{q^2+q-2}{2}, 
 \end{equation} 
and if $q=2^m$ for $m\ge2$, then
 \begin{equation} \label{pgl2}
\mathbf{B}(\mathrm{PGL}(2,q)) = 2^{2m-1}. \end{equation}
\end{corollary}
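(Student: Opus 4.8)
The plan is to split the argument by the parity of $q$, exactly as in the proof of Corollary \ref{l1}. When $q=2^m$ with $m\ge 2$ one has $\gcd(2,q-1)=1$, hence $\mathrm{PGL}(2,q)=\mathrm{PSL}(2,q)$, and the identity $\mathbf{B}(\mathrm{PGL}(2,2^m))=2^{2m-1}$ is just the even case of Corollary \ref{l1}. So all the content lies in the odd case, where $\mathrm{PGL}(2,q)$ is genuinely larger than $\mathrm{PSL}(2,q)$ and appears as item (vi) of Theorem \ref{niceclassification}.

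For $q=p^n$ odd with $q\ge 5$, I would first record that $\mathrm{PGL}(2,q)$ carries a nontrivial partition whose members are precisely its maximal cyclic subgroups, and that by the classical subgroup structure of $\mathrm{PGL}(2,q)$ these have order $p$ (lying inside an elementary abelian Sylow $p$-subgroup of order $q$, there being $q+1$ such Sylow subgroups), $q-1$ (a split maximal torus, self-centralizing, with dihedral normalizer of order $2(q-1)$), or $q+1$ (a non-split maximal torus, with dihedral normalizer of order $2(q+1)$). Theorem \ref{l0} then reduces the problem to evaluating and comparing the three local breadths $\mathbf{b}_p$, $\mathbf{b}_{q-1}$, $\mathbf{b}_{q+1}$: the argument already used inside the proof of Theorem \ref{l0} (and specialized in Corollary \ref{l1}) shows that for any other divisor $e$ the set $L_e(\mathrm{PGL}(2,q))$ splits along the partition into pieces each dominated by one of $L_p(\mathrm{PGL}(2,q))$, $L_{q-1}(\mathrm{PGL}(2,q))$, $L_{q+1}(\mathrm{PGL}(2,q))$, so no such $e$ can beat them.

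Next I would run the three counts, using repeatedly that distinct members of the partition meet trivially. The Sylow count gives $|L_p(\mathrm{PGL}(2,q))|=(q+1)(q-1)+1=q^2$, so $\mathbf{b}_p=q^2/p$. For $\mathbf{b}_{q-1}$ one collects the $q-2$ nonidentity elements of each of the $q(q+1)/2$ split tori together with the unique involution of each of the $q(q-1)/2$ non-split tori (the only non-split-torus elements of order dividing $\gcd(q-1,q+1)=2$), obtaining
\[ |L_{q-1}(\mathrm{PGL}(2,q))| = 1 + \frac{q(q+1)(q-2)}{2} + \frac{q(q-1)}{2} = \frac{(q-1)^2(q+2)}{2}, \qquad \mathbf{b}_{q-1} = \frac{q^2+q-2}{2}. \]
Symmetrically $|L_{q+1}(\mathrm{PGL}(2,q))| = 1 + \tfrac{q^2(q-1)}{2} + \tfrac{q(q+1)}{2} = \tfrac{(q+1)(q^2-q+2)}{2}$, whence $\mathbf{b}_{q+1}=\tfrac{q^2-q+2}{2}$. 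Comparing, $\mathbf{b}_{q-1}-\mathbf{b}_{q+1}=q-2>0$, and since $p\ge 3$ we get $\mathbf{b}_{q-1}-\mathbf{b}_p=\tfrac{q^2+q-2}{2}-\tfrac{q^2}{p}\ge \tfrac{q^2+q-2}{2}-\tfrac{q^2}{3}>0$ for $q\ge 5$; hence the maximum is $\mathbf{b}_{q-1}$, i.e. $\mathbf{B}(\mathrm{PGL}(2,q))=\frac{q^2+q-2}{2}$, as claimed.

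The step I expect to be the main obstacle is the involution bookkeeping in the odd case: $\mathrm{PGL}(2,q)$ has two conjugacy classes of involutions — those inside $\mathrm{PSL}(2,q)$, which lie in split tori, and the ``outer'' ones, which lie in non-split tori — and one must be careful not to double count them when forming $L_{q-1}(\mathrm{PGL}(2,q))$ and $L_{q+1}(\mathrm{PGL}(2,q))$. This is precisely where the partition of $\mathrm{PGL}(2,q)$ into maximal cyclic subgroups does the work, as it forces all the relevant pairwise intersections down to the identity. A minor further point is justifying that $e=p$ (and not a higher power of $p$) is the $p$-divisor to test, which follows because the Sylow $p$-subgroup has exponent $p$, so $L_p(\mathrm{PGL}(2,q))=L_{p^2}(\mathrm{PGL}(2,q))=\cdots$ and $\mathbf{b}_{p^s}=|L_p(\mathrm{PGL}(2,q))|/p^s$ is strictly decreasing in $s$, exactly as in Case 1 of the proof of Theorem \ref{l0}. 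Everything else is elementary arithmetic.
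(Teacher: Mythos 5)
Your argument is correct, and it follows the same basic strategy as the paper (reduce via Theorem \ref{l0} to the orders of the maximal cyclic subgroups, then count inside the partition), but you carry it noticeably further than the paper's own write-up. For $q=2^m$ the paper recounts involutions directly to get $|L_2(\mathrm{PGL}(2,q))|=q^2$, whereas you invoke $\mathrm{PGL}(2,2^m)\cong\mathrm{PSL}(2,2^m)$ and quote Corollary \ref{l1}; both work, yours is shorter. For odd $q$ the paper computes only $|L_{q-1}(\mathrm{PGL}(2,q))|=\tfrac{(q-1)(q^2+q-2)}{2}$ --- which agrees with your $\tfrac{(q-1)^2(q+2)}{2}$ since $(q-1)(q^2+q-2)=(q-1)^2(q+2)$ --- and then explicitly omits the verification that $\mathbf{b}_{q-1}$ dominates, ``since the logic has been widely discussed.'' You supply exactly that missing step: the partition into $q+1$ elementary abelian Sylow $p$-subgroups, $q(q+1)/2$ split tori and $q(q-1)/2$ non-split tori yields $\mathbf{b}_p=q^2/p$, $\mathbf{b}_{q-1}=\tfrac{q^2+q-2}{2}$, $\mathbf{b}_{q+1}=\tfrac{q^2-q+2}{2}$ (your counts sum to $|G|=q^3-q$, and for $q=5$, where $\mathrm{PGL}(2,5)\cong S_5$, they give $5$, $14$, $11$, consistent with $\mathbf{B}(S_5)=14$), and the comparisons $\mathbf{b}_{q-1}-\mathbf{b}_{q+1}=q-2>0$ and $\mathbf{b}_{q-1}-q^2/p>0$ for $p\ge 3$, $q\ge 5$ close the argument. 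Your care over the two classes of involutions (split-torus versus non-split-torus) is precisely the point where a naive count would go wrong, and the disjointness of the partition handles it as you say. In short: same route as the paper, but your version actually completes the comparison the paper only gestures at.
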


\begin{proof} Again we overlap the same argument of the proof of Theorem \ref{l0}, sketching the main ideas and omitting some details on the computations. Since $|\mathrm{PGL}(2,2^m)|=q(q-1)(q+1)$, counting the elements of order $q$ and $q-1$ in $\mathrm{PGL}(2,q)$, we obtain for  $q=2^m$ and $m\ge2$ that 
\begin{equation}
|L_2(\mathrm{PGL}(2,q))| =|\{x \in \mathrm{PGL}(2,q) \  | \ x^2=1\}| = {q^{2}} \end{equation} and so
\begin{equation} 
\mathbf{B}(\mathrm{PGL}(2,q)) = \frac{q^2}{2} =2^{2m-1}.
\end{equation}
When $q$ is an odd prime bigger than five, we get
\begin{equation} 
|L_{q-1}(\mathrm{PGL}(2,q))|=|\{x \in \mathrm{PGL}(2,q) \ | \ x^{q-1}=1\}|=\frac{(q-1)(q^2+q-2)}{2}
\end{equation}
and so \begin{equation} 
\mathbf{b}_{q-1}(\mathrm{PGL}(2,q))=\frac{q^2+q-2}{2}.
\end{equation}
 The main idea is always the same, that is,  we should check that among all possible values of local breadth $\mathbf{b}_f(\mathrm{PGL}(2,q))$ with $f \in \mathrm{Div}(\mathrm{PGL}(2,q))$, the global breadth $\mathbf{B}(\mathrm{PGL}(2,q))$ is reached by the value $\mathbf{b}_{q-1}(\mathrm{PGL}(2,q))$  when $q\geq 3$ is odd and by the value $\mathbf{b}_{2}(\mathrm{PGL}(2,q))$  when $q=2^m$ with $m\ge2$. Computational details are omitted (since the logic has been widely discussed).
\end{proof}

\section{Proof of the Second Main Theorem}

We have now all that we need, in order to prove our second main result and may upper bound the size of most of the groups which appear in Theorem \ref{niceclassification}, determining when they belong to $\mathcal{H}$.

\begin{proof}[Proof of Theorem \ref{maintheorem}]\

(i). We note by direct computations (or by the list of Section 2 above) that   \begin{equation}\mathbf{B}(S_4)=5 \ \ \mathrm{and} \ \ |S_4| \le \mathbf{B}(S_4) (\mathbf{B}(S_4) +1) = 30 \ \ \Longrightarrow \ \ S_4 \in \mathcal{H}
\end{equation}  and  $S_4$ also has a nontrivial partition. This case follows. 

\medskip
\medskip
\medskip
\medskip
\medskip
\medskip
\medskip
\medskip
(ii). Assume that $q$ is even, that is, $q=2^m$ for some $m \ge 2$. We claim that
\begin{equation}|\mathrm{PSL}(2,2^m)|^2  \le  8  \ \mathbf{B}(\mathrm{PSL}(2,2^m)^3.
\end{equation}
In fact for all $m \ge 1$
\begin{equation}2^{2m+1} \ge 1   \ \  \Longrightarrow \  \ 2^{4m} \ge 2^{4m}-2^{2m+1}+1=(2^{2m}-1)^2  
\end{equation}
and so we may conclude by Corollary \ref{l1} that
\begin{equation}\label{extra1}|\mathrm{PSL}(2,2^m)|^2 =  2^{2m} \ \cdot \  (2^{2m}-1)^2 \le 2^{2m}  \ \cdot \ 2^{4m}
\end{equation}
$$=2^{2m} \ \cdot \  8 \ \cdot \ 2^{4m} \  \cdot \ 2^{-3}=8 \ \cdot \ 2^{6m-3} = 8 \ \cdot \ \mathbf{B}(\mathrm{PSL}(2,2^m)^3.$$
Making the square root to both sides of the above inequality,  the claim follows.

In order to show the same for an odd prime power $q \ge 5$ or for $q=3^n$ with $n \ge 2$, we note that
\begin{equation}8 \ \mathbf{B}(\mathrm{PSL}(2,q))^3 = 8 \  \left(\frac{q(q-1)}{2} -2\right)^3=  (q^2-q-4)^3
\end{equation}
and that
\begin{equation}|\mathrm{PSL}(2,q)|^2= \frac{q^2 \ (q^2-1)^2}{\gcd(2,q-1)^2}= \frac{q^6-2q^4+q^2}{\gcd(2,q-1)^2}.  \end{equation}
This allows us to conclude that the condition
\begin{equation} \gcd(2,q-1)^2 \  (q^2-q-4)^3  \ge  (q^6-2q^4+q^2)
\end{equation} 
is always satisfied and so
\begin{equation}|\mathrm{PSL}(2,q)|^2  \le  8  \ \mathbf{B}(\mathrm{PSL}(2,q))^3,
\end{equation}
from which the result follows. 

Finally, $\mathrm{PSL}(2,q)$ possesses a nontrivial partition when $q \ge 4$ by Theorem \ref{niceclassification} and is a simple group when $q \ge 4$. Therefore it remains to check that $\mathrm{PSL}(2,q) \in \mathcal{H}$ when $q \ge 4$.   This is true, because for all $q \ge 4$ we have by Corollary \ref{l1} and by $c^2 \ge 8 $ that
\begin{equation} 0 \le c^2 -2   \le  {\mathbf{B}(\mathrm{PSL}(2,q))} + \frac{1}{\mathbf{B}(\mathrm{PSL}(2,q))}
\end{equation}   
\begin{equation}
\Longrightarrow  \  c^2 \  {\mathbf{B}(\mathrm{PSL}(2,q))}  - 2  {\mathbf{B}(\mathrm{PSL}(2,q))} \le  {\mathbf{B}(\mathrm{PSL}(2,q))}^2  +1  
\end{equation}   
\begin{equation}
   \Longrightarrow  \  c^2 \  {\mathbf{B}(\mathrm{PSL}(2,q))} \le  {\mathbf{B}(\mathrm{PSL}(2,q))}^2 + 2  {\mathbf{B}(\mathrm{PSL}(2,q))} +1  
   \end{equation}   
\begin{equation}
 \Longrightarrow  c^2  \ \mathbf{B}(\mathrm{PSL}(2,q))^3 \le   \mathbf{B}(\mathrm{PSL}(2,q))^4 +  2  \mathbf{B}(\mathrm{PSL}(2,q))^3 +    \mathbf{B}(\mathrm{PSL}(2,q))^2 
 \end{equation}   
\begin{equation}
 \Longrightarrow  c^2  \ \mathbf{B}(\mathrm{PSL}(2,q))^3 \le   {\mathbf{B}(\mathrm{PSL}(2,q))}^2  \ {( \mathbf{B}(\mathrm{PSL}(2,q))+1)}^2
 \end{equation}   
and so
\begin{equation}
|\mathrm{PSL}(2,q)|^2  \le  c^2  \ \mathbf{B}(\mathrm{PSL}(2,q))^3 \le   {\mathbf{B}(\mathrm{PSL}(2,q))}^2  \ {( \mathbf{B}(\mathrm{PSL}(2,q)) + 1)}^2   \ \Longrightarrow \ \mathrm{PSL}(2,q) \in \mathcal{H}.
\end{equation}   

Part (ii) is completely proved.

\medskip
\medskip
\medskip
\medskip
\medskip
\medskip
\medskip
\medskip

(iii). Assume that $q=2^{2n+1}$ for some $n \ge 1$.  We note that
\begin{equation}8 \ \mathbf{B}(\mathrm{Sz}(q))^3 = 8 \  \left(\frac{q^4-q^3-(q+1)}{2}\right)^3=  ((q^2+q+2)(q^2-1))^3
\end{equation}
and that
\begin{equation}|\mathrm{Sz}(q)|^2= (q^2(q^2+1)(q-1))^2=q^4(q^2+1)^2(q-1)^2.  \end{equation}
This allows us to conclude that the condition
\begin{equation} ((q^2+q+2)(q^2-1))^3 \ge  q^4(q^2+1)^2(q-1)^2.
\end{equation} 
is always satisfied for all $q \ge 8$ and so
\begin{equation}|\mathrm{Sz}(q)|^2  \le  8  \ \mathbf{B}(\mathrm{Sz}(q))^3,
\end{equation}
from which the result follows. 

Similarly, $\mathrm{Sz}(q)$ possesses a nontrivial partition when $q \ge 8$ by Theorem \ref{niceclassification} and is a simple group when $q \ge 8$. Therefore it remains to check that $\mathrm{Sz}(q) \in \mathcal{H}$ when $q \ge 8$.   This is true, because for all $q \ge 8$ we have by Corollary \ref{l2} and by $c^2 \ge 8 $ that
\begin{equation} 0 \le c^2 -2   \le  {\mathbf{B}(\mathrm{Sz}(q))} + \frac{1}{\mathbf{B}(\mathrm{Sz}(q))}
\end{equation}   
\begin{equation}
\Longrightarrow  \  c^2 \  {\mathbf{B}(\mathrm{Sz}(q))}  - 2  {\mathbf{B}(\mathrm{Sz}(q))} \le  {\mathbf{B}(\mathrm{Sz}(q))}^2  +1  
\end{equation}   
\begin{equation}
   \Longrightarrow  \  c^2 \  \mathbf{B}(\mathrm{Sz}(q)) \le  \mathbf{B}(\mathrm{Sz}(q))^2 + 2   \mathbf{B}(\mathrm{Sz}(q)) +1  
   \end{equation}   
\begin{equation}
 \Longrightarrow  c^2  \ \mathbf{B}(\mathrm{Sz}(q))^3 \le   {\mathbf{B}(\mathrm{Sz}(q))}^4 +   2 { \mathbf{B}(\mathrm{Sz}(q))}^3 +    {\mathbf{B}(\mathrm{Sz}(q))}^2 
 \end{equation}   
\begin{equation}
 \Longrightarrow  c^2  \ \mathbf{B}(\mathrm{Sz}(q))^3 \le   {\mathbf{B}(\mathrm{Sz}(q))}^2  \ {( \mathbf{B}(\mathrm{Sz}(q))+1)}^2
 \end{equation}   
and so
\begin{equation}
|\mathrm{Sz}(q)|^2  \le  c^2  \ \mathbf{B}(\mathrm{Sz}(q))^3 \le   {\mathbf{B}(\mathrm{Sz}(q))}^2  \ {( \mathbf{B}(\mathrm{Sz}(q))+1)}^2   \ \Longrightarrow \ \mathrm{Sz}(q) \in \mathcal{H}.
\end{equation}   
The result (iii) is completely proved.
\medskip
\medskip
\medskip
\medskip
\medskip
\medskip
\medskip
\medskip

    (iv). Assume that $q=p^m$ for $q \ge 5$ and $q=2^r$ for $r \geq 2$. Using the same logic which we have seen in (ii) and (iii) above and involving Corollary \ref{l3}, we get
\begin{equation}
8 \ \mathbf{B}(\mathrm{PGL}(2, q))^3 = 8 \  \left(\frac{q^2+q-2}{2}\right)^3=  (q^2+q-2)^3
\end{equation}
and 
\begin{equation}|\mathrm{PGL}(2, q)|^2= (q(q-1)(q+1))^2=q^2(q+1)^2(q-1)^2.  \end{equation}
This allows us to conclude that the condition
\begin{equation} (q^2+q-2)^3 \ge  q^2(q+1)^2(q-1)^2.
\end{equation} 
is always satisfied for all $q \ge 5$ and so
\begin{equation}|\mathrm{PGL}(2, q)|^2  \le  8  \ \mathbf{B}(\mathrm{PGL}(2, q))^3,
\end{equation}
from which the result follows.  Once more, if $q=2^m$ for $m\geq 2$, then Corollary \ref{l3} implies
\begin{equation} 8 \ \mathbf{B}(\mathrm{PGL}(2, q))^3 = 8 \ \left(\frac{q^2}{2}\right)^3 =q^{6} \end{equation} and \begin{equation}|\mathrm{PGL}(2, q)|^2= (q(q-1)(q+1))^2=q^2(q+1)^2(q-1)^2. \end{equation}
It is evident that \begin{equation} q^{6} \ge q^2(q+1)^2(q-1)^2=q^6-2q^4+q^2. \end{equation} is always satisfied for all $q=2^m$, therefore \begin{equation}|\mathrm{PGL}(2, q)|^2 \le 8 \ \mathbf{B}(\mathrm{PGL}(2, q))^3 \end{equation}
Since $\mathrm{PGL}(2, q)$ possesses a nontrivial partition when $q \ge 4$,  it remains to check that $\mathrm{PGL}(2, q) \in \mathcal{H}$ when $q \ge 4$.   This is true, because for all $q \ge 8$ we may follow a similar argument as done in the previous situations, that is, 
\begin{equation} 0 \le c^2 -2   \le  {\mathbf{B}(\mathrm{PGL}(2,q))} + \frac{1}{\mathbf{B}(\mathrm{PGL}(2, q))}
\end{equation}   
\begin{equation}
\Longrightarrow  \  c^2 \  {\mathbf{B}(\mathrm{PGL}(2,q))}  - 2  {\mathbf{B}(\mathrm{PGL}(2,q))} \le  {\mathbf{B}(\mathrm{PGL}(2,q))}^2  +1  
\end{equation}   
\begin{equation}
   \Longrightarrow  \  c^2 \  {\mathbf{B}(\mathrm{PGL}(2,q))} \le  {\mathbf{B}(\mathrm{PGL}(2,q))}^2 + 2  {\mathbf{B}(\mathrm{PGL}(2,q))} +1  
   \end{equation}   
\begin{equation}
 \Longrightarrow  c^2  \ \mathbf{B}(\mathrm{PGL}(2,q))^3 \le   {\mathbf{B}(\mathrm{PGL}(2,q))}^4 +  2{ \mathbf{B}(\mathrm{PGL}(2,q))}^3 +    {\mathbf{B}(\mathrm{PGL}(2,q))}^2 
 \end{equation}   
\begin{equation}
 \Longrightarrow  c^2  \ \mathbf{B}(\mathrm{PGL}(2,q))^3 \le   {\mathbf{B}(\mathrm{PGL}(2,q))}^2  \ {( \mathbf{B}(\mathrm{PGL}(2,q))+1)}^2
 \end{equation}   
and so
\begin{equation}
|\mathrm{PGL}(2, q)|^2  \le  c^2  \ \mathbf{B}(\mathrm{PGL}(2,q))^3 \le   {\mathbf{B}(\mathrm{PGL}(2,q))}^2  \ {( \mathbf{B}(\mathrm{PGL}(2,q))+1)}^2   \ \Longrightarrow \ \mathrm{PGL}(2, q) \in \mathcal{H}.
\end{equation}   
Part (iv) of the result is completely proved, so the proof ends.
\end{proof}

We end with an useful example, which helps to visualize our main results in concrete situations.

\begin{example}\label{e1}Note that $\mathrm{PGL}(2,3) \simeq S_4$ and  $\mathrm{PGL}(2,4) \simeq A_5$ for which we have already seen that $\mathbf{B}(S_4)=5$ and  $\mathbf{B}(A_5)=8$, moreover $S_4, A_5 \in \mathcal{H}$.

We may use  GAP \cite{gap}, in order to compute the orders of the elements of the group $\mathrm{PGL}(2,2^m)$ for $ 2 \le m \leq 6$, confirming Theorems \ref{l0} and \ref{maintheorem}. First of all,  the maximal cyclic subgroups of $\mathrm{PGL}(2,2^m)$ have order precisely $2$, $2^m-1$, and $2^m+1$.  In particular, if $m=6$, then we consider $\mathrm{PGL}(2,64)$ of order $262080=2^{6} \cdot 3^{2} \cdot 5 \cdot 7 \cdot 13$, $d \in \mathrm{Div}(\mathrm{PGL}(2,64))$, $\pi(\mathrm{PGL}(2,64))=\{2,3,5,7,13\} \subseteq \mathrm{Div}(\mathrm{PGL}(2,64))$ and must observe that $\mathbf{c}_d=|O_d|/\phi(d)$, where
$O_d=\{x \in \mathrm{PGL}(2,64) \mid o(x)=d\}= \{ \mbox{Elements of } \ \mathrm{PGL}(2,64) \ \mbox{of order exactly} \ d\}.$ We find that
\begin{center}
\begin{tabular}{|c|l|c|c|}
            \hline
        $d$ & $\mathbf{c}_d$  & $|L_d(\mathrm{PGL}(2,64))|$ & $\mathbf{b}_d(\mathrm{PGL}(2,64))$ \\
    \hline    
    1  &   1      & 1  &  1 \\
    2  & 4095     & 4096  &  2048 \\
    3  & 2080     & 4161  &  1387 \\
    5  & 2080     & 8065  &  1613 \\
    7  & 2080    & 12481  &  1783 \\
    9  & 2080    & 16641  &  1849 \\
    13 & 2080    & 24193  &  1861 \\
    21 & 2080    & 41601  &  1981 \\
    63 & 2080    & 128961  & 2047 \\
    65 & 2080   & 129025  & 1985 \\
    4  & 0       & 4096  &  1024 \\
    8  & 0       & 4096  &  512 \\
    16 & 0       & 4096  &  256 \\
    32 & 0       & 4096  &  128 \\
    64 & 0       & 4096  &  64 \\
    6  & 0       & 8256  &  1376 \\
    12 & 0       & 8256  &  688 \\
    15 & 0       & 12225 &  815 \\
    18 & 0       & 20736  & 1152 \\
    \vdots &   \vdots      &  \vdots  &   \vdots \\
      262080 & 0    & 262080  &  1 \\
    \hline
    \end{tabular}
\end{center}
\centerline{Fig.1. Values of local breadth for $\mathrm{PGL}
(2,64)$ from \eqref{lb} and \eqref{phib}.}

\medskip

Note that the maximal cyclic subgroups of $\mathrm{PGL}(2,64)$ have order $2$, $63$ and $65$. Moreover $\mathbf{B}(\mathrm{PGL}(2,64))=2048=\mathbf{b}_{2}(\mathrm{PGL}
(2,q))$, which occurs at the order of the maximal subgroup $M \simeq C_2$ of $\mathrm{PGL}(2,64)$. Here we appreciate the value of Theorem \ref{l0}, which easily allows us to conclude that $\mathbf{B}(\mathrm{PGL}(2,64))=2^{2m-1}$ when $m=6$. See also  Corollary \ref{l3}. The bounds of Theorem \ref{maintheorem} (iv) can be easily checked  for $\mathrm{PGL}(2,64)$.
\end{example}

We give another evidence of the computational advantages of our main results.

\begin{example}\label{e2}
We focus on  $\mathrm{PSL}(2,9) \simeq A_6$, again using GAP \cite{gap}. Here we have a  simple group of nontrivial partition of  $|\mathrm{PSL}(2,9)|=360$ with $\pi(\mathrm{PSL}(2,9)) = \{2,3,5\}$.

With the same notations which have been used in the Example \ref{e1}, we find for $d=6$ and $q=9$ that $\mathbf{b}_{6}(\mathrm{PSL}(2,9))< \mathbf{b}_{2}(\mathrm{PSL}(2,9))$ and $\mathbf{b}_{6}(\mathrm{PSL}(2,9))< \mathbf{b}_{3}(\mathrm{PSL}(2,9))$ even though $|L_6(\mathrm{PSL}(2,9))| \geq |L_2(\mathrm{PSL}(2,9))|$ and $|L_6(\mathrm{PSL}(2,9))| \geq |L_3(\mathrm{PSL}(2,9))|$.

In the present situation,  the maximal cyclic subgroups of $\mathrm{PSL}(2,9)$ are of order $3$, $4$ and $5$,  and $\mathbf{B}(\mathrm{PSL}(2,9))=34=\frac{ 9 \cdot (9-1)}{2}-2=\mathbf{b}_{4}(\mathrm{PSL}(2,9))$, which can be calculated using Corollary \ref{l1} of \eqref{pslodd}  and it occurred at the order of the maximal subgroup $M\simeq C_4=C_{\frac{9-1}{2}}$ of $\mathrm{PSL}(2,9)$. 
\begin{center}
\begin{tabular}{|c|l|c|c|}
            \hline
        $d$ & $\mathbf{c}_d$  & $|L_d(\mathrm{PSL}(2,9))|$ & $\mathbf{b}_d(\mathrm{PSL}(2,9))$ \\
    \hline 
    1  &   1    & 1    &  1 \\  
    2  & 45     & 46   &  23 \\
    3  & 40     & 81   &  27 \\
    4  & 45     & 136  &  34 \\
    5  & 36     & 145  &  29 \\
    6  & 0    & 126  &  21 \\
    8 & 0     & 136  &  17 \\
    9 & 0     & 81  &  9 \\
    10 & 0    & 190  & 19 \\
    12 & 0    & 216  & 18 \\
    15  & 0       & 225  &  15 \\
    18  & 0       & 126  &  7 \\
    20 & 0       &  280 & 14  \\
    24 & 0       &  216 &  9 \\
    30 & 0       &  270 &  9 \\
    36  & 0       &  216 & 6 \\
    40 & 0       & 280  &  7 \\
    60 & 0       & 360 &  6 \\ 
    72 & 0       & 216  & 3 \\ 
    90 & 0       & 270  &  3 \\
    120 & 0       & 360 &  3 \\
    180 & 0       & 360  & 2 \\
    360 & 0       & 360  & 1 \\
   \hline
    \end{tabular}
\end{center}
\medskip
\centerline{Fig.2. Values of local breadth for $\mathrm{PSL}
(2,9) \simeq A_6$ from \eqref{lb} and \eqref{phib}.}

\medskip

It is now easy to check that the bound of Theorem \ref{maintheorem} (ii) is satisfied.
\end{example}

\medskip

The determination of  $p$-groups  $G$ with  $H_p (G) \neq G$ in $\mathcal{H}$ (or not) becomes more difficult to check. Definitively, Theorem \ref{l0} helps since it works for all groups with nontrivial partitions, but the presence of bounds as per Theorem \ref{maintheorem} may be more challenging. This happens for the  groups that we mentioned in (ii), (iii) and (iv) of Theorem \ref{niceclassification}. We do not know whether they are all in $\mathcal{H}$, or not. In fact the following problem is open:

\begin{conjecture} The class of groups $\mathcal{H}$ might contain all  groups in Theorem \ref{niceclassification}. It might be possible that $\mathcal{H}$ contains also  simple groups and nonsimple groups, which do not possess necessarily   a nontrivial partition. 
\end{conjecture}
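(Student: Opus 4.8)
For the first assertion the plan is to go through the seven classes of Theorem~\ref{niceclassification}. Classes (i), (v), (vi) and (vii) are already settled in Theorem~\ref{maintheorem}, so the task reduces to the families (ii), (iii) and (iv): $p$-groups $G$ with $H_p(G)\neq G$, groups of Hughes--Thompson type, and Frobenius groups. In all three, Theorem~\ref{l0} reduces the determination of $\mathbf{B}(G)$ to evaluating $\mathbf{b}_{|M|}(G)$ at a maximal cyclic subgroup $M$, so the first step in each case is to make the partition $\mathrm{P}(G)$ explicit in terms of the invariant naturally attached to the group — the Hughes subgroup $N=H_p(G)$ (which, by the results of Kegel on partitions, either is trivial, i.e.\ $G$ has exponent $p$, or has index $p$ in $G$ and is nilpotent) or the Frobenius kernel $K$ — and then to compute $|L_{|M|}(G)|$ by summing $|C|-1$ over the members $C$ of $\mathrm{P}(G)$ lying below the relevant divisor, exactly as was done for $\mathrm{PSL}(2,q)$, $\mathrm{PGL}(2,q)$ and $\mathrm{Sz}(q)$ in Corollaries~\ref{l1}--\ref{l3} via \eqref{phib}.

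In the $p$-group and Hughes--Thompson cases, if $G$ has exponent $p$ then $\mathbf{b}_p(G)=|G|/p$ and the membership is immediate for noncyclic $G$; otherwise $1<N<G$ with $|G:N|=p$, every element outside $N$ has order $p$, and $\mathrm{P}(G)$ consists of $\{N\}$ (or of the partition induced on $N$) together with the cyclic subgroups of order $p$ meeting $N$ trivially, so $|L_p(G)|=1+(p-1)|N|+\tau$ with $\tau$ the number of elements of order $p$ in $N$, whence $\mathbf{B}(G)\ge\mathbf{b}_p(G)=(1+(p-1)|N|+\tau)/p$, and since $|G|=p\,|N|$ and a group with a nontrivial partition is noncyclic (forcing $|N|\ge 3$) a short computation gives $|G|\le\mathbf{b}_p(G)(\mathbf{b}_p(G)+1)$. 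For a Frobenius group $G=K\rtimes H$ with $|K|=n$, $|H|=h$ one has $h\mid n-1$, $K$ nilpotent by Thompson's theorem, and $\mathrm{P}(G)=\{K\}\cup\{H^{g}:g\in G\}$ with distinct conjugates of $H$ intersecting trivially; thus $\mathbf{b}_{\exp(K)}(G)=n/\exp(K)$ for a maximal cyclic subgroup inside $K$, and $\mathbf{b}_m(G)=(1+n(|L_m(H)|-1))/m$ for a maximal cyclic subgroup of $H$ of order $m$. A case split according to whether $\mathbf{B}(G)$ is realized in the kernel or in a complement, combined with the Zassenhaus description of Frobenius complements (cyclic Sylow subgroups at odd primes, cyclic or generalized quaternion Sylow $2$-subgroups, and a metacyclic core extended by one of the finitely many binary polyhedral groups, among them $\mathrm{SL}(2,3)$ and $\mathrm{SL}(2,5)$), reduces the Frobenius estimate to finitely many complement types, for each of which the explicit value of $|L_m(H)|$ is meant to make $\mathbf{b}_m(G)$ large enough to yield $|G|=nh\le\mathbf{B}(G)(\mathbf{B}(G)+1)$. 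The second assertion I would only approach loosely: for simple groups through the classification, using that each finite simple group has a maximal cyclic subgroup $M$ with $|M|$ small relative to $|G|$ (a maximal torus for groups of Lie type, a near-maximal length cycle for $A_n$), so that $\mathbf{b}_{|M|}(G)\gg|G|/|M|^{2}$ and $|G|\le\mathbf{B}(G)(\mathbf{B}(G)+1)$ holds with wide margin outside a few low-rank families checked directly; and for the nonsimple partition-free groups through the ``new classes'' of the introduction, where Theorem~\ref{anidea} already confines a refined group of breadth $m$ to primes at most $2m-1$ and to a short list of configurations of order $O(m^{2})$.

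The main obstacle — and the reason the statement stays conjectural — is that the inequality defining $\mathcal{H}$ is attained with \emph{equality} on an infinite family: for the one-dimensional affine group $C_p\rtimes C_{p-1}$ over $\mathbb{F}_p$ one finds $\mathbf{B}=p-1$ and $|G|=p(p-1)=\mathbf{B}(\mathbf{B}+1)$ exactly, and already $D_6\simeq S_3$ lies on this boundary. Hence the $3/2$-power estimate of Theorem~\ref{maintheorem} does not survive to the Frobenius case, and one cannot get away with a mere lower bound for $\mathbf{B}(G)$: the global breadth must be pinned down precisely throughout families (ii)--(iv), which in the Frobenius case amounts to controlling the full distribution of element orders inside an arbitrary nilpotent kernel and an arbitrary Zassenhaus complement — a step with essentially no slack to absorb an imprecise count, and the one where either a weakening of the exponent in the defining inequality or an outright failure of membership in $\mathcal{H}$ would first become visible.
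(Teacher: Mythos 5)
The statement you are addressing is not a theorem of the paper: it is explicitly labelled a conjecture, and the authors introduce it with the words ``the following problem is open.'' There is therefore no proof in the paper to compare yours against, and your text, candidly, is not a proof either but a research programme. Judged as such, it is aligned with the paper's actual method: the reduction to the outstanding classes (ii)--(iv) of Theorem~\ref{niceclassification} is correct (classes (i), (v), (vi), (vii) are exactly what Theorem~\ref{maintheorem} covers), the idea of computing $|L_k(G)|$ by summing over the components of the partition is precisely how the paper handles $\mathrm{PSL}(2,q)$, $\mathrm{Sz}(q)$ and $\mathrm{PGL}(2,q)$ via Theorem~\ref{l0} and \eqref{phib}, and your observation that $\mathrm{Hol}(C_p)\simeq C_p\rtimes C_{p-1}$ attains $|G|=\mathbf{B}(G)(\mathbf{B}(G)+1)$ with equality is confirmed by Proposition~\ref{tek1} (with $s=p$, $k=p-1$ one gets $\mathbf{B}(G)=p-1$). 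That equality case is indeed the right diagnosis of why the Frobenius family is the hard one.

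Two corrections and one warning. First, a logical slip: since $x\mapsto x(x+1)$ is increasing, a \emph{lower} bound $\mathbf{b}_k(G)\le\mathbf{B}(G)$ always suffices to place $G$ in $\mathcal{H}$, provided $|G|\le\mathbf{b}_k(G)(\mathbf{b}_k(G)+1)$; what the equality examples show is not that one must determine the global maximum over all divisors, but that the chosen local breadth must be computed \emph{exactly} (the additive $+1$ in $|L_k(G)|=1+n(|L_k(H)|-1)$ is precisely what saves the sharply $2$-transitive case, as one sees by redoing your estimate without discarding it). Second, the genuine gaps: the Frobenius case with an arbitrary nilpotent kernel and an arbitrary Zassenhaus complement, and the $p$-group case for small $|N|$, are sketched but not carried out, and the heuristic for general simple groups ($\mathbf{b}_{|M|}(G)\gg|G|/|M|^2$) is not substantiated --- it implicitly assumes the conjugates of a maximal cyclic subgroup intersect almost trivially, which is exactly the partition property whose absence you are trying to circumvent. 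So the proposal is a reasonable plan consistent with the paper's techniques, but it neither proves the conjecture nor could it be inserted as a proof; the statement remains open, as the paper itself asserts.
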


\medskip
\medskip
\medskip



\end{document}